\definecolor{codegreen}{rgb}{0,0.6,0}
\definecolor{codegray}{rgb}{0.5,0.5,0.5}
\definecolor{codepurple}{rgb}{0.58,0,0.82}
\definecolor{backcolour}{rgb}{0.95,0.95,0.92}
\lstdefinestyle{mystyle}{
  backgroundcolor=\color{backcolour},   commentstyle=\color{codegreen},
  keywordstyle=\color{magenta},
  numberstyle=\tiny\color{codegray},
  stringstyle=\color{codepurple},
  basicstyle=\ttfamily\footnotesize,
  breakatwhitespace=false,         
  breaklines=true,                 
  captionpos=b,                    
  keepspaces=true,                 
  numbers=left,                    
  numbersep=5pt,                  
  showspaces=false,                
  showstringspaces=false,
  showtabs=false,                  
  tabsize=2
}
\newtheorem{theorem}{Theorem}[section]
\newtheorem{lemma}[theorem]{Lemma}
\newtheorem*{conjecture*}{Conjecture}
\theoremstyle{definition}
\theoremstyle{remark}
\newtheorem*{remark*}{remark}
\author{Runbo Li}
\address{International Curriculum Center, The High School Affiliated to Renmin University of China, Beijing, China}
\email{runbo.li.carey@gmail.com}
\title[]{On prime-producing sieves and distribution of $\alpha p - \beta$ mod $1$}
\subjclass[2020]{11N35, 11N36} 
\keywords{prime, sieve methods, asymptotic formula}
\begin{document}
	
\begin{abstract}
The author proves that there are infinitely many primes $p$ such that $\| \alpha p - \beta \| < p^{-\frac{28}{87}}$, where $\alpha$ is an irrational number and $\beta$ is a real number. This sharpens a result of Jia (2000) and provides a new triple $(\gamma, \theta, \nu)=(\frac{59}{87}, \frac{28}{87}, \frac{1}{29})$ that can produce special primes in Ford and Maynard's work on prime-producing sieves. Our minimum amount of Type-II information required ($\nu = \frac{1}{29}$) is less than any previous work on this topic using only traditional Type-I and Type-II information.
\end{abstract}

\maketitle

\tableofcontents

\section{Introduction}
Let $\alpha$ be an irrational number and $\| y \|$ denote the smallest distance from $y$ to integers. Earlier work on this problem was done by Vinogradov \cite{Vinogradov} in 1954, who showed that for any real number $\beta$, there are infinitely many prime numbers $p$ such that if $\tau = \frac{1}{5}-\varepsilon$, then
\begin{equation}
\| \alpha p - \beta \| < p^{-\tau}. 
\end{equation}
In 1977, Vaughan \cite{Vaughan} got $\tau = \frac{1}{4}-\varepsilon$ using his identity. In 1983, Harman \cite{Harman1} introduced a new sieve method to this topic and got $\tau = \frac{3}{10}$. Jia \cite{Jia413} improved it to $\tau = \frac{4}{13}$ in 1993. In 1996, Harman \cite{Harman2} further improved it to $\tau = \frac{7}{22}$ by applying a new technique (the variable role-reversal) in his sieve. In 2000, Jia \cite{Jia928} got $\theta = \frac{9}{28}$. It is worth to mention that Balog \cite{Balog1986} also got the same result in 1986 under the condition that $\| \alpha n \| < n^{-\frac{43}{31}-\varepsilon}$ holds for infinitely many integers $n$. If we only focus on the special case $\beta = 0$, then even better exponents $\frac{16}{49}$ and $\frac{1}{3}-\varepsilon$ were obtained by Heath-Brown and Jia \cite{HeathBrownJia} and Matomäki \cite{Matomaki} respectively. In a personal communication, Matomäki mentioned that Maynard has got some $\tau > \frac{1}{3}$. Note that the Riemann Hypothesis implies that (1) holds for $\tau = \frac{1}{3}-\varepsilon$. In this paper, we show that (1) holds for $\tau = \frac{28}{87}$.
\begin{theorem}\label{t1}
Suppose that $\alpha$ is an irrational number, then for any real number $\beta$, there are infinitely many prime numbers $p$ such that
$$
\| \alpha p - \beta \| < p^{-\frac{28}{87}}. 
$$
\end{theorem}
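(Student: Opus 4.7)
The plan is to follow the Fourier--analytic framework of Vinogradov refined through Harman's sieve, but calibrated to the specific triple $(\gamma,\theta,\nu)=(59/87,28/87,1/29)$ advertised in the abstract. First I would attach the condition $\|\alpha p-\beta\|<\delta$, with $\delta=X^{-\tau}$ and $\tau=28/87$, to primes $p\in(X,2X]$ by way of a Selberg-type trigonometric minorant $\psi$ of the indicator of $(-\delta,\delta)\pmod{1}$. Writing $\psi(x)=2\delta+\sum_{h\neq 0}c_h\,e(hx)$ with $c_h$ decaying rapidly, the theorem reduces to showing that
\[
\sum_{X<p\le 2X}e(h\alpha p)\ll \frac{X}{\log^{A}X}
\]
uniformly for $1\le|h|\ll\delta^{-1}\log^{A}X$, so that the off-diagonal Fourier contributions are absorbed into the main term $2\delta\of{\pi(2X)-\pi(X)}$.

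To estimate this prime sum I would invoke Harman's sieve, which rewrites $\sum_{p\sim X}$ as a linear combination of arithmetic sums whose coefficients have prescribed shape. The two arithmetic inputs required are a Type~I estimate $\sum_{m\le M}a_m\sum_{mn\sim X}e(h\alpha mn)\ll X^{1-\eta}$ valid for all $M\le X^{\gamma}$ with $\gamma=59/87$, and a Type~II estimate $\sum_{m\sim M,\,n\sim N}a_m b_n\,e(h\alpha mn)\ll X^{1-\eta}$ valid whenever both $M$ and $N$ lie in a range of length at least $X^{\nu}=X^{1/29}$. Both inputs would be derived from Dirichlet's approximation of $h\alpha$ by a rational $a/q$ together with Weyl differencing (for Type~I) and Cauchy--Schwarz combined with second-moment bounds on the resulting Dirichlet polynomial (for Type~II).

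With these inputs secured, the core of the proof is a Buchstab--Harman decomposition of $\sum_{X<p\le 2X}e(h\alpha p)$ into pieces each of which is either directly Type~I, directly Type~II, or can be reshaped into an admissible bilinear form through the variable role-reversal technique introduced by Harman in \cite{Harman2}. I would iterate the Buchstab identity, at each stage either recognizing a piece as admissible and retaining it, or discarding it with a controlled numerical loss if all its variables fall outside the Type~I and Type~II windows. The cumulative discarded mass must then be shown, by explicit integration of the resulting multi-variable Buchstab integrals, to be strictly smaller than the main term, which is of size $\asymp\delta X/\log X$.

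The main obstacle will be precisely this numerical and combinatorial bookkeeping. The Type~II window width $X^{1/29}$ is narrower than in any previous work on $\|\alpha p-\beta\|$, which forces the decomposition to descend several additional Buchstab levels (and most likely to invoke the role reversal more than once) before every residual exponential sum has been converted into admissible shape. Verifying that the multi-variable Buchstab integrals evaluated against $(59/87,28/87,1/29)$ still leave a positive balance, and that no single decomposition variable strays into a range where neither the Type~I nor the Type~II estimate applies, is the delicate step that distinguishes the new exponent $28/87$ from the previous record $9/28$ of \cite{Jia928}.
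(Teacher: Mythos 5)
Your proposal correctly identifies the three main ingredients — a Selberg/Vaaler-type Fourier minorant to detect $\|\alpha p-\beta\|<\delta$, Type~I and Type~II exponential sum inputs calibrated to $(\gamma,\theta,\nu)=(59/87,28/87,1/29)$, and an iterated Buchstab decomposition with role-reversals followed by explicit numerical integration of Buchstab integrals — and this is indeed the skeleton of the paper's argument, which closely follows Jia and Harman. However, the stated reduction step contains a genuine conceptual gap. You claim the theorem ``reduces to showing that $\sum_{X<p\le 2X}e(h\alpha p)\ll X/\log^A X$ uniformly for $1\le|h|\ll\delta^{-1}\log^A X$,'' and then propose to establish this via Harman's sieve. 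This is backwards: such a uniform estimate on the prime exponential sum would yield an \emph{asymptotic} for the number of admissible primes and is precisely what is \emph{not} available for this range of $h$ — if it were, one would get $\tau$ close to $1/3$ with no sieve at all. Harman's method is a lower-bound comparison sieve: one decomposes $S(\mathcal{A},(2x)^{1/2})$ by Buchstab and seeks, for some pieces only, the comparison $S(\mathcal{A}_{\cdot},\cdot)=2\delta(1+o(1))S(\mathcal{B}_{\cdot},\cdot)$; positive pieces for which no asymptotic is available are discarded (giving a loss), and negative such pieces are bounded above via $\omega$-function integrals. One wins if the cumulative loss is strictly less than $1$, not by producing a small bound on $\sum_p e(h\alpha p)$. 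Indeed, what the paper proves is $S(\mathcal{A},(2x)^{1/2}) \geq (1-0.996+o(1))\cdot 2\delta S(\mathcal{B},(2x)^{1/2}) > 0$, a lower bound with a substantial deficit, not an absorption of off-diagonal Fourier terms into $2\delta(\pi(2X)-\pi(X))$.

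A secondary imprecision: the Type~II information is not ``both $M$ and $N$ lie in a range of length at least $X^{1/29}$''; rather, the asymptotic formula holds when the single bilinear variable $M$ falls in one of two explicit windows $[x^{28/87},x^{31/87}]$ or $[x^{56/87},x^{59/87}]$, each of width $x^{1/29}$ and mirror-symmetric about $x^{1/2}$. This matters because the whole combinatorial bookkeeping — deciding which of $(t_1,\dots,t_k)$ can be partitioned into an $(m,n)$ with $m$ in a Type~II window — hinges on the precise window endpoints. The paper's decomposition in fact descends to eight Buchstab variables and employs two-sided role-reversals (producing two cohabiting almost-prime variables $\gamma$ and $\gamma_1$), and the final numerical margin is razor-thin (total loss $<0.996$). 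Your proposal gestures at this correctly, but without the correct lower-bound framework in place the bookkeeping you describe would be computing the wrong object.
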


A direct corollary of our Theorem~\ref{t1} is the distribution of $p^{\theta} - \beta$ mod $1$ for some $\theta < 1$.
\begin{theorem}\label{t2}
For $\frac{31}{87} \leqslant \theta < 1$ and any real number $\beta$, there are infinitely many prime numbers $p$ such that
$$
\| p^{\theta} - \beta \| < p^{-\frac{1-\theta}{2}+\varepsilon}. 
$$
\end{theorem}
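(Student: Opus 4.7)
The plan is to reduce the $\|p^\theta - \beta\|$ problem to the $\|\alpha p - \beta\|$ problem solved in Theorem~\ref{t1} via a Taylor expansion on a short interval. For a large parameter $N$, I will aim to produce a prime $p$ of size comparable to $N$ satisfying the bound of Theorem~\ref{t2}; letting $N \to \infty$ then yields infinitely many such primes.

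Concretely, I partition the region of interest into sub-intervals $[N_0, N_0 + H]$ of length $H = N^{(3-\theta)/4 - \delta}$ for a tiny auxiliary $\delta > 0$, and on each sub-interval Taylor-expand
\[
p^\theta = N_0^\theta + \theta N_0^{\theta - 1} (p - N_0) + O\bigl(N_0^{\theta - 2} (p - N_0)^2\bigr).
\]
The quadratic error is $\ll N^{\theta - 2} H^2 \ll N^{-(1-\theta)/2 - 2\delta}$, comfortably inside the target precision. Setting $\alpha = \theta N_0^{\theta - 1}$ (which is irrational for generic $N_0$) and $\beta_0 = \alpha N_0 - N_0^\theta + \beta$, this rewrites as
\[
p^\theta - \beta = \alpha p - \beta_0 + O\bigl(N^{-(1-\theta)/2 - 2\delta}\bigr).
\]
I then invoke an $\alpha$-uniform short-interval analogue of Theorem~\ref{t1} to supply a prime $p \in [N_0, N_0 + H]$ with $\|\alpha p - \beta_0\| < p^{-28/87}$. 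Since the hypothesis $\theta \geq 31/87$ is precisely equivalent to $(1-\theta)/2 \leq 28/87$, combining the two bounds gives
\[
\|p^\theta - \beta\| \leq \|\alpha p - \beta_0\| + O\bigl(N^{-(1-\theta)/2 - 2\delta}\bigr) \ll p^{-(1-\theta)/2 + \varepsilon},
\]
as required.

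The main obstacle is the short-interval, uniform-in-$\alpha$ version of Theorem~\ref{t1} used in the second step. One must verify that the sieve decomposition underlying Theorem~\ref{t1} produces $\gg H / \log N$ such primes in each sub-interval of length $H = N^{(3-\theta)/4 - \delta}$, with all Type-I and Type-II estimates uniform in the $N$-dependent parameter $\alpha = \theta N_0^{\theta - 1}$. Because $H \geq N^{1/2 + c}$ for some $c = c(\theta) > 0$ whenever $\theta$ is bounded away from $1$ (and the conclusion is trivial once $\theta$ is close enough to $1$ that $p^{-(1-\theta)/2 + \varepsilon} > 1/2$), the standard short-interval adaptations of the sieve inputs should go through. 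The genuinely delicate point is the uniformity in $\alpha$, which forces one to revisit the exponential-sum estimates at the heart of the proof of Theorem~\ref{t1} rather than invoking it as a black box.
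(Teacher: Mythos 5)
Your reduction has a genuine gap at its central step: the ``$\alpha$-uniform short-interval analogue of Theorem~\ref{t1}'' you invoke is not a consequence of Theorem~\ref{t1}, and it cannot be obtained from the paper's method for the particular $\alpha$ your linearization produces. Theorem~\ref{t1} is proved only along the sparse sequence of scales $x=q^{2/(1+\tau)}$ attached to the continued-fraction denominators of a \emph{fixed} irrational $\alpha$; it says nothing about primes in a prescribed window $[N_0,N_0+H]$, nothing at every scale, and nothing uniform in $\alpha$. Worse, your slope $\alpha=\theta N_0^{\theta-1}\asymp N_0^{-(1-\theta)}$ tends to $0$, so $0/1$ is already an extremely good rational approximation to it and its continued fraction beyond that is completely uncontrolled; the method needs a convergent denominator of size roughly $x^{(1+\tau)/2}$, which need not exist for this $\alpha$. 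The linearization discards exactly the feature --- the curvature of $t\mapsto t^{\theta}$ --- that makes the $p^{\theta}$ problem tractable via van der Corput's second- and higher-derivative tests. Finally, even granting all of that, demanding the full Type--I and Type--II ranges inside intervals of length $N^{1/2+c}$ is a major strengthening that ``standard short-interval adaptations'' do not supply.

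The deduction actually intended is a corollary of the \emph{method}, not of the statement: one reruns the identical decomposition of Section~3 with $\mathcal{A}=\{n: x<n\leqslant 2x,\ \|n^{\theta}-\beta\|<\delta\}$ and $\delta=x^{-\frac{1-\theta}{2}+\varepsilon}$, where the analogues of Lemmas~\ref{l21} and~\ref{l22} are established by exponential-sum estimates for the nonlinear phase $hn^{\theta}$ (as in Balog's and Harman's work on $p^{\theta}$ modulo one). Your observation that $\theta\geqslant\frac{31}{87}$ is equivalent to $\frac{1-\theta}{2}\leqslant\frac{28}{87}$ is the right one --- it is what guarantees that the required Type--I/II ranges match those of Section~2, so the same numerical computation closes --- but this has to be fed into the sieve directly rather than routed through Theorem~\ref{t1} as a black box.
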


Another corollary of our Theorem~\ref{t1} is the following result focusing on Diophantine approximation with Gaussian primes, which improves Harman's exponent $\frac{7}{22}$ \cite{HarmanGaussian722}.
\begin{theorem}\label{t3}
Let $0 \leqslant \omega_1 < \omega_2 \leqslant 2\pi$. Then, given $\alpha \in \mathbb{C} \backslash \mathbb{Q}[i], \beta \in \mathbb{C}$, there are infinitely many Gaussian primes $\frak{p}$ such that
$$
\| \alpha \frak{p} - \beta \| < |\frak{p}|^{-\frac{28}{87}}, \quad \omega_1 \leqslant \operatorname{arg} \frak{p} \leqslant \omega_2.
$$
\end{theorem}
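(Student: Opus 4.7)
The plan is to follow the transfer framework introduced by Harman in \cite{HarmanGaussian722}, substituting for his $7/22$ input the stronger Type~I and Type~II information that underlies the proof of Theorem~\ref{t1}. Every Gaussian prime $\mathfrak{p}$ with $|\mathfrak{p}|\leq X$ is, up to units, either a rational prime $q\equiv 3\pmod{4}$ with $q\leq X$ or a factor in a splitting $q=a^2+b^2$ with $q\equiv 1\pmod{4}$, $q\leq X^2$. I would detect the angular constraint $\omega_1\leq\arg\mathfrak{p}\leq\omega_2$ via a Beurling--Selberg smooth majorant expanded in Hecke Gr\"ossencharaktere $\lambda_k(\mathfrak{p})=(\mathfrak{p}/|\mathfrak{p}|)^{4k}$ truncated at $|k|\leq X^{\varepsilon}$, and split the complex Diophantine condition $\|\alpha\mathfrak{p}-\beta\|<|\mathfrak{p}|^{-28/87}$ into two real inequalities handled by a Fourier expansion in each coordinate. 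The problem then reduces to estimating, for $|h|\leq X^{28/87+\varepsilon}$ and $|k|\leq X^{\varepsilon}$, twisted exponential sums of the shape $\sum_{\mathfrak{p}}\Lambda(\mathfrak{p})\lambda_k(\mathfrak{p})e(h(\alpha\mathfrak{p}-\beta))$.

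Next I would run the same Harman sieve decomposition as in the proof of Theorem~\ref{t1}, but now inside $\Z[i]$. This requires two families of arithmetic inputs. The Type~I input is a non-trivial bound for $\sum_{\mathfrak{n}}\lambda_k(m\mathfrak{n})e(h\alpha m\mathfrak{n})$ for rational $m\leq X^{59/87}$; after summing over lattice points of a given norm and handling the character $\lambda_k$ by Vaaler's majorants together with the Erd\H os--Tur\'an inequality, this reduces to the rational Type~I estimate already used for Theorem~\ref{t1}. The Type~II input concerns $\sum a_{\mathfrak{m}}b_{\mathfrak{n}}\lambda_k(\mathfrak{m}\mathfrak{n})e(h\alpha \mathfrak{m}\mathfrak{n})$ with $\mathfrak{m}$ in the range corresponding to $\nu=1/29$; since $\lambda_k$ is completely multiplicative, the twist factors as $\lambda_k(\mathfrak{m})\lambda_k(\mathfrak{n})$ and is absorbed into the coefficient sequences, so that Cauchy--Schwarz reduces matters to the rational Type~II bound up to an $X^{\varepsilon}$ loss from the sum over $k$.

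The main obstacle I anticipate is verifying that the role--reversal steps used in the proof of Theorem~\ref{t1} survive the Gaussian extension uniformly in $h$ and $k$. One has to check that when the sieve transfers a factor across the Type~I/Type~II boundary, the resulting coefficient sequences still obey the divisor--type bounds needed for Cauchy--Schwarz, and that the $4k$-fold oscillation of $\lambda_k$ does not interact badly with the additive character in $h$; this is precisely the delicate point in Harman \cite{HarmanGaussian722}. Once each arithmetic input used in Theorem~\ref{t1} is shown to have a Gaussian analogue with the same exponents $\gamma=59/87$ and $\nu=1/29$, summing the resulting sieve inequality over $h$ and $k$ and unwinding the Beurling--Selberg approximation yields a positive lower bound for the number of Gaussian primes in the prescribed sector satisfying the Diophantine inequality, establishing Theorem~\ref{t3}.
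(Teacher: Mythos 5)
Your proposal correctly identifies the proof strategy that the paper implicitly relies on: the paper itself offers no argument for Theorem~\ref{t3} beyond calling it a corollary of Theorem~\ref{t1} and citing Harman's Gaussian framework \cite{HarmanGaussian722}. Your sketch --- a Beurling--Selberg majorant expanded in Hecke Gr\"ossencharaktere to detect the sector, a two-coordinate Fourier expansion for the complex Diophantine inequality, Gaussian analogues of the Type~I and Type~II estimates, and the observation that the Buchstab-style decomposition is purely combinatorial and transfers verbatim --- is exactly the machinery of the cited reference at $\tau = 7/22$, and the passage to $\tau = 28/87$ does indeed come solely from substituting the improved decomposition established in the proof of Theorem~\ref{t1}. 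Two small corrections: in the Gaussian setting the outer Type~I variable $m$ should run over Gaussian integers rather than rationals, and your description of Gaussian primes should account for the ramified prime above $2$. Your flagged concern about whether the role-reversal steps survive the passage to $\Z[i]$ uniformly in $h$ and $k$ is legitimate but already resolved in the literature: Harman's $7/22$ rational proof uses role reversal, and his Gaussian paper recovers the same exponent, so the device is known to transfer with the required uniformity.
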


In 2024, Ford and Maynard \cite{FordMaynard} considered a series of general problems on prime-producing sieves. For those sieves that only use Type-I and Type-II information inputs, they defined a triple $(\gamma, \theta, \nu)$ and considered various values of $\gamma, \theta \text{ and } \nu$ that can produce primes. In their notation, our Theorem~\ref{t1} implies the following result:
\begin{theorem}\label{t4}
The triple $(\gamma, \theta, \nu) = (\frac{59}{87}, \frac{28}{87}, \frac{1}{29})$ can produce primes with the property $\| \alpha p - \beta \| < p^{-\frac{28}{87}}$.
\end{theorem}

Throughout this paper, we suppose that $\frac{a}{q}$ is a convergent to the continued fraction for $\alpha$ and $\varepsilon$ is a sufficiently small positive constant. The letter $p$, with or without subscript, is reserved for prime numbers. Let $\tau = \frac{28}{87}$, $x = q^{\frac{2}{1+\tau}}$ and $\delta = (2x)^{-\tau}$.

\section{Asymptotic formulas}
Now we follow the discussion in \cite{Jia928}. Let $p_{j}=x^{t_j}$ and put
$$
\mathcal{B}=\{n: x<n \leqslant 2 x\}, \quad \mathcal{A}=\{n: x<n \leqslant 2 x,\ \| \alpha n - \beta \| < \delta \},
$$
$$
\mathcal{A}_d=\{a: ad \in \mathcal{A}\}, \quad P(z)=\prod_{p<z} p, \quad S(\mathcal{A}, z)=\sum_{\substack{a \in \mathcal{A} \\ (a, P(z))=1}} 1.
$$
Then we only need to show that $S\left(\mathcal{A},(2 x)^{\frac{1}{2}}\right) >0$. Our aim is to show that the sparser set $\mathcal{A}$ contains the expected proportion of primes compared to the bigger set $\mathcal{B}$, which requires us to decompose $S\left(\mathcal{A}, (2 v)^{\frac{1}{2}}\right)$ and prove asymptotic formulas of the form
\begin{equation}
S\left(\mathcal{A}, z\right) = 2 \delta (1+o(1)) S\left(\mathcal{B}, z\right)
\end{equation}
for some parts of it, and drop the other positive parts.

Here are some known asymptotic formulas for types of sieve functions, which were first proved by Harman \cite{Harman1} using traditional Type-I and Type-II arithmetic information.
\begin{lemma}\label{l21} (Type-I)
Suppose that $M \ll x^{\frac{59}{87}}$ and $a(m) = O(1)$. Then we have
$$
\sum_{m \sim M} a(m) S\left(\mathcal{A}_{m}, x^{\frac{1}{29}}\right) = 2 \delta (1+o(1)) \sum_{m \sim M} a(m) S\left(\mathcal{B}_{m}, x^{\frac{1}{29}}\right).
$$
\end{lemma}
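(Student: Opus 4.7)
The plan is to establish this asymptotic by a Fourier (Vaaler) expansion of the indicator of the Diophantine condition $\|\alpha m a - \beta\| < \delta$, followed by bounds on linear exponential sums using the Dirichlet approximation $|\alpha - a/q| \leq 1/q^2$ with $q = x^{(1+\tau)/2} = x^{115/174}$.

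First I would apply Vaaler's lemma to write
$$\texttt{Boole}[\|\gamma\| < \delta] = 2\delta + \sum_{1 \leq |h| \leq H} c_h\,e(h\gamma) + R_H(\gamma),$$
with $|c_h| \leq \min(2\delta, 1/(\pi|h|))$, truncation length $H = \delta^{-1}(\log x)^A$ for a large constant $A$, and $R_H$ a Vaaler remainder whose effect after substitution is $O(x/(mH))$ per $m$. Substituting $\gamma = \alpha m a - \beta$ in $S(\mathcal{A}_m, z)$ with $z = x^{1/29}$, the constant term produces the desired main term $2\delta\,S(\mathcal{B}_m, z)$, and the Vaaler remainder aggregates to $O(\delta x/\log^{A-1} x)$ after weighting by $a(m)$ and summation over $m \sim M$, which is acceptable provided $A$ is chosen large enough.

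Next I would handle the oscillatory part
$$E = \sum_{1 \leq |h| \leq H} c_h\,e(-h\beta)\,T_h, \qquad T_h := \sum_{m \sim M} a(m) \sum_{\substack{a \sim x/m \\ (a, P(z))=1}} e(h\alpha m a).$$
Removing the coprimality condition by a truncated Möbius inversion (e.g.\ Rosser-Iwaniec) at level $D = x^\eta$ for a small constant $\eta$, the sieve truncation error is negligible by the Fundamental Lemma, and the inner sum over $b = a/d$ becomes $\sum_{b \sim x/(md)} e(h\alpha m d b)$, bounded by $\min(x/(md), \|h\alpha m d\|^{-1})$. Invoking the classical estimate
$$\sum_{k \leq K} \min(U,\|k\alpha\|^{-1}) \ll (K/q + 1)(U + q\log q)$$
with appropriate grouping of $(h, m, d)$ (using divisor-function weights) and summing against $|c_h| \leq 1/h$, the total error is bounded. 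The key inequalities $q > \delta^{-1}$ and $q < \delta x$, equivalent to $\tau < 1$ and $\tau < 1/3$ respectively (both satisfied by $\tau = 28/87$ since $28/87 < 29/87 = 1/3$), ensure that every arising term is $o(\delta x)$.

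The main obstacle is the tight parameter matching: $M = x^{59/87} = x^{1-\tau}$ sits exactly at the boundary of the Type-I range accessible by this method, so only logarithmic slack remains between the error and the main term of size $\asymp \delta x/\log z$. One must choose $H = \delta^{-1}(\log x)^A$ with $A$ large and $D = x^\eta$ with $\eta$ small, and track log powers carefully throughout; otherwise, the argument is the classical Type-I estimate originally due to Harman \cite{Harman1} and amounts to routine bookkeeping once the specific values $\tau = 28/87$, $z = x^{1/29}$, and $q = x^{115/174}$ are substituted.
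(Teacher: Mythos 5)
The paper does not actually prove Lemma~\ref{l21}; it simply refers to Harman's Lemma~7 in \cite{Harman1}, so your sketch is already more explicit than the paper. Your overall strategy — Vaaler/Fourier expansion of the indicator, Dirichlet approximation with $q=x^{(1+\tau)/2}=x^{115/174}$, sieve inversion to remove the coprimality condition, and the classical bound $\sum_{k\leq K}\min(U,\|k\alpha\|^{-1})\ll(K/q+1)(U+q\log q)$ — is exactly the framework of Harman's original Type-I lemma, and your verification that $q>\delta^{-1}$ and $q<\delta x$ reduce to $\tau<1$ and $\tau<\tfrac13$ is correct.

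However, there is a genuine gap in how you dispose of the coprimality condition $(a,P(z))=1$ with $z=x^{1/29}$. You propose to use a truncated Möbius/Rosser--Iwaniec sieve at level $D=x^{\eta}$ with $\eta$ ``small'' and claim that ``the sieve truncation error is negligible by the Fundamental Lemma.'' This does not hold here: the Fundamental Lemma has relative error $O\!\left(e^{-s\log s}\right)$ where $s=\log D/\log z$, and since $z=x^{1/29}$ is a \emph{fixed power} of $x$, taking $D=x^{\eta}$ with $\eta$ small yields $s=29\eta=O(1)$, so the error is a fixed constant, not $o(1)$. You cannot simultaneously make $s\to\infty$ (so that the Fundamental Lemma gives a genuine asymptotic) and keep $D=z^{s}$ a small power of $x$; one of the two must give. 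Moreover, once the sieve divisors $d\leq D$ are in play, the variable $k=md$ in your exponential-sum estimate ranges up to $MD$, and the ``$(K/q)\cdot q\log q\asymp K\log q$'' term in the classical bound contributes $\gg MD\log x$ after summing dyadically; for $M$ at the stated upper limit $x^{59/87}=x^{1-\tau}$ this already matches or exceeds $\delta x=x^{1-\tau}$ before any $D>1$ is introduced. So the sieve-removal step and the resulting parameter count both need a more careful treatment than your sketch gives — for instance, comparing the Legendre expansions of $S(\mathcal{A}_m,z)$ and $S(\mathcal{B}_m,z)$ term-by-term so that the large-$d$ tails cancel, or iterating Buchstab down to a genuinely small $z_0$ before invoking the Fundamental Lemma — which is precisely the content of Harman's Lemma~7 that the paper defers to. Your remark that only ``logarithmic slack'' remains at $M=x^{59/87}$ also slightly misreads the arithmetic: the inequality $q<\delta x$ holds with a power saving $x^{3/174}$, but it is the sieve level $D$ that threatens to consume this margin, and that is exactly the point your sketch leaves unresolved.
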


\begin{proof}
The proof is similar to that of [\cite{Harman1}, Lemma 7].
\end{proof}

\begin{lemma}\label{l22} (Type-II)
Suppose that $x^{\frac{28}{87}} \ll M \ll x^{\frac{31}{87}}$ or $x^{\frac{56}{87}} \ll M \ll x^{\frac{59}{87}}$ and that $a(m), b(n) = O(1)$. Then we have
$$
\sum_{m \sim M} \sum_{n} a(m) b(n) S\left(\mathcal{A}_{m n}, v(m,n)\right) = 2 \delta (1+o(1)) \sum_{m \sim M} \sum_{n} a(m) b(n) S\left(\mathcal{B}_{m n}, v(m,n)\right).
$$
\end{lemma}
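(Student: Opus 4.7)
The plan is a Fourier-theoretic reduction to bilinear exponential sums of the type developed by Harman \cite{Harman1} and Jia \cite{Jia928} for the distribution problem $\|\alpha p - \beta\| < \delta$. The two stated Type-II windows are dual under the substitution $M \leftrightarrow N = x/M$ (simply relabel the roles of the $m$- and $n$-variables), so it suffices to treat the shorter range $x^{28/87} \ll M \ll x^{31/87}$. The first step is to expand
\[
\texttt{Boole}[\|\alpha a - \beta\| < \delta] \;=\; 2\delta + \sum_{0<|h|\leq H} c_h\, e\!\left(h(\alpha a - \beta)\right) + O(x^{-A})
\]
via a Vaaler--Beurling approximation with $H = \delta^{-1}x^{\varepsilon}$ and $|c_h| \ll \min(\delta,|h|^{-1})$. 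Inserted into $S(\mathcal{A}_{mn},v(m,n))$ and summed against $a(m)b(n)$, the $h=0$ term produces exactly $2\delta$ times the analogous $\mathcal{B}$-sum, which is the claimed main term. It remains to show that the $h \neq 0$ remainder is $o(\delta x)$.

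For this I expand the sifting factor $S(\cdot,v(m,n))$ by Möbius inversion (with a short Buchstab iteration if necessary to keep all sifting levels below $x^{1/2}$), reducing the $h\neq 0$ error to controlling, uniformly in $h$ and in an auxiliary divisor $\ell$, bilinear sums
\[
T(h,\ell) \;=\; \sum_{m \sim M_1}\sum_{n \sim N_1} \tilde a(m)\tilde b(n)\, e(h\alpha \ell m n),
\]
with bounded divisor-like coefficients and $M_1 N_1 \ell \asymp x$, where $M_1$ runs over a dyadic subrange of $[x^{28/87}, x^{31/87}]$. Applying Cauchy--Schwarz in the $m$-variable and opening the square yields an inner quantity of the form $\sum_{r \leq N_1}\min(M_1,\|h\alpha\ell r\|^{-1})$, which by the classical Vinogradov estimate with the convergent $|\alpha - a/q| < 1/q^2$ and $q = x^{(1+\tau)/2} = x^{115/174}$ is bounded by $(HN_1\ell/q + 1)(M_1 + q)\,x^{\varepsilon}$.

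The main obstacle is showing that after summing over $h$ against the weights $c_h$ (whose $L^1$-mass is $O(\log x)$), the resulting error saves a power of $x$ over $\delta x$. This reduces to verifying an inequality of the shape $(HN/q + 1)(M + q) \ll \delta^{-1} x^{1 - 2\eta}$ for some fixed $\eta > 0$; substituting $H = x^{28/87}$, $\delta = x^{-28/87}$, $q = x^{115/174}$ and $MN \asymp x$, this becomes a pair of linear inequalities in $\log_x M$ that are satisfied precisely on the stated Type-II window, and the endpoints $28/87$ and $31/87$ are exactly the critical values at which the off-diagonal part of the Vinogradov bound begins to dominate. The remaining bookkeeping—uniformity in $h$ through partial summation against $c_h$, extraction of $\gcd(h,q)$ factors in the Vinogradov step, and collapsing of the auxiliary divisor $\ell$—is routine and proceeds along the lines of \cite{Jia928}, Lemma 2.
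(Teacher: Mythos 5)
Your outline correctly identifies the standard machinery (Vaaler--Beurling expansion, reduction to bilinear exponential sums, Vinogradov's lemma with the convergent $a/q$), but the Cauchy--Schwarz step is taken in the wrong variable, and the argument breaks at precisely the point you would need it to work. On the stated window $x^{28/87}\ll M\ll x^{31/87}$ one has $N\asymp x/M\gg M$, so $m$ is the \emph{shorter} variable. Applying Cauchy--Schwarz in $m$ and opening the square over $n$--pairs gives $|T_h|^2\ll M\bigl(MN+N\sum_{0<|r|<N}\min(M,\|h\alpha r\|^{-1})\bigr)$; by Vinogradov's lemma the inner sum contains a term of size $q$, so $|T_h|^2$ contains a term $\asymp MNq=xq$. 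This is independent of $h$, so no treatment of the weights $c_h$ can remove it, and it yields $|T_h|\gg x^{1/2}q^{1/2}=x^{289/348}$, whereas the target is $\delta x=x^{236/348}$. Your closing inequality $(HN/q+1)(M+q)\ll\delta^{-1}x^{1-2\eta}$ is also not the right one: after Cauchy--Schwarz in $m$ the requirement is $\sum_{r}\min(M,\|h\alpha r\|^{-1})\ll\delta^{2}x^{1-2\eta}$, which is a factor of roughly $\delta^{3}$ smaller than what you wrote, and it fails because $q=x^{115/174}$ already exceeds $\delta^{2}x=x^{62/174}$. So the inequality you propose does not reproduce the window $[28/87,31/87]$ and cannot, because Cauchy--Schwarz in $m$ is the wrong move.

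The argument the paper is implicitly invoking (Harman's Lemma~6 and its analogue in Jia) applies Cauchy--Schwarz in the \emph{longer} variable $n$, carrying the $h$--sum together with the $m$--sum inside the square. One then meets $\sum_n e\bigl(\alpha n(h_1m_1-h_2m_2)\bigr)$; the diagonal $h_1m_1=h_2m_2$ contributes $O(\delta MN\,x^{\varepsilon})$ by divisor estimates, and the off--diagonal is bounded by $\delta^2HM\,x^{\varepsilon}\sum_{0<|s|\le 2HM}\min(N,\|\alpha s\|^{-1})$ using the uniform bound $O(HMx^{\varepsilon})$ on the number of representations $s=h_1m_1-h_2m_2$. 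After Vinogradov and the outer $N^{1/2}$, the binding constraints are $M>\delta^{-1}=x^{28/87}$ (from the diagonal), $M<\delta^{2}x=x^{31/87}$ (from the term $\asymp\delta H M^{2}\asymp M^{2}$), and $q<\delta x$, i.e.\ $\tau<1/3$ (from the term $\asymp\delta Mq$). The first two conditions are exactly the stated Type--II window, and the third is what caps $\tau$ at $28/87$; none of this structure survives in your Cauchy--Schwarz--in--$m$ version.
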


\begin{proof}
The proof is similar to that of [\cite{Harman1}, Lemma 6].
\end{proof}

Many authors, such as Heath-Brown and Jia \cite{HeathBrownJia} and Matomäki \cite{Matomaki}, have found more arithmetic information by using estimations of Kloosterman sums. However, their methods usually require $\beta = 0$, which is not applicable on the present paper.

\section{The final decomposition}
Let $\omega(u)$ denote the Buchstab function determined by the following differential-difference equation
\begin{align*}
\begin{cases}
\omega(u)=\frac{1}{u}, & \quad 1 \leqslant u \leqslant 2, \\
(u \omega(u))^{\prime}= \omega(u-1), & \quad u \geqslant 2 .
\end{cases}
\end{align*}
Moreover, we have the upper and lower bounds for $\omega(u)$:
\begin{align*}
\omega(u) \geqslant \omega_{0}(u) =
\begin{cases}
\frac{1}{u}, & \quad 1 \leqslant u < 2, \\
\frac{1+\log(u-1)}{u}, & \quad 2 \leqslant u < 3, \\
\frac{1+\log(u-1)}{u} + \frac{1}{u} \int_{2}^{u-1}\frac{\log(t-1)}{t} d t \geqslant 0.5607, & \quad 3 \leqslant u < 4, \\
0.5612, & \quad u \geqslant 4, \\
\end{cases}
\end{align*}
\begin{align*}
\omega(u) \leqslant \omega_{1}(u) =
\begin{cases}
\frac{1}{u}, & \quad 1 \leqslant u < 2, \\
\frac{1+\log(u-1)}{u}, & \quad 2 \leqslant u < 3, \\
\frac{1+\log(u-1)}{u} + \frac{1}{u} \int_{2}^{u-1}\frac{\log(t-1)}{t} d t \leqslant 0.5644, & \quad 3 \leqslant u < 4, \\
0.5617, & \quad u \geqslant 4. \\
\end{cases}
\end{align*}
We shall use $\omega_0(u)$ and $\omega_1(u)$ to give numerical bounds for some sieve functions discussed below. We shall also use the simple upper bound $\omega(u) \leqslant \max(\frac{1}{u}, 0.5672)$ (see Lemma 8(iii) of \cite{JiaPSV}) to estimate high-dimensional integrals.

Before decomposing, we define the asymptotic region $I$ as
\begin{align}
\nonumber I (m, n) :=&\ \left\{ \frac{28}{87} \leqslant m \leqslant \frac{31}{87} \text{ or } \frac{56}{87} \leqslant m \leqslant \frac{59}{87} \text{ or } \frac{28}{87} \leqslant n \leqslant \frac{31}{87} \text{ or } \frac{56}{87} \leqslant n \leqslant \frac{59}{87} \right. \\
\nonumber & \left. \qquad \text{ or } \frac{28}{87} \leqslant m + n \leqslant \frac{31}{87} \text{ or } \frac{56}{87} \leqslant m + n \leqslant \frac{59}{87} \right\}.
\end{align}

By Buchstab's identity, we have
\begin{align}
\nonumber S\left(\mathcal{A}, (2 x)^{\frac{1}{2}}\right) =&\ S\left(\mathcal{A}, x^{\frac{1}{29}}\right) - \sum_{\frac{1}{29} \leqslant t_1 < \frac{1}{2}} S\left(\mathcal{A}_{p_1}, x^{\frac{1}{29}}\right) + \sum_{\substack{\frac{1}{29} \leqslant t_1 < \frac{1}{2} \\ \frac{1}{29} \leqslant t_2 < \min \left(t_1, \frac{1}{2}(1 - t_1) \right) }} S\left(\mathcal{A}_{p_1 p_2}, p_2\right) \\
=&\ S_1 - S_2 + S_3.
\end{align}
By Lemma~\ref{l21}, we can give asymptotic formulas for $S_1$ and $S_2$. Before estimating $S_{3}$, we first split it into six parts:
\begin{align}
\nonumber S_3 =&\ \sum_{\substack{\frac{1}{29} \leqslant t_1 < \frac{1}{2} \\ \frac{1}{29} \leqslant t_2 < \min \left(t_1, \frac{1}{2}(1 - t_1) \right) }} S\left(\mathcal{A}_{p_1 p_2}, p_2\right) \\
\nonumber =&\ \sum_{\substack{\frac{1}{29} \leqslant t_1 < \frac{1}{2} \\ \frac{1}{29} \leqslant t_2 < \min \left(t_1, \frac{1}{2}(1 - t_1) \right) \\ (t_1, t_2) \in I }} S\left(\mathcal{A}_{p_1 p_2}, p_2\right) + \sum_{\substack{\frac{1}{29} \leqslant t_1 < \frac{1}{2} \\ \frac{1}{29} \leqslant t_2 < \min \left(t_1, \frac{1}{2}(1 - t_1) \right)  \\ (t_1, t_2) \notin I \\ t_1 < \frac{28}{87},\ t_1 + 2 t_2 > \frac{59}{87} \text{ or } t_1 + t_2 > \frac{59}{87} }} S\left(\mathcal{A}_{p_1 p_2}, p_2\right)  \\
\nonumber &+ \sum_{\substack{\frac{1}{29} \leqslant t_1 < \frac{1}{2} \\ \frac{1}{29} \leqslant t_2 < \min \left(t_1, \frac{1}{2}(1 - t_1) \right)  \\ (t_1, t_2) \notin I \\ t_1 \leqslant \frac{31}{87},\ t_1 + 2 t_2 \leqslant \frac{59}{87} }} S\left(\mathcal{A}_{p_1 p_2}, p_2\right) + \sum_{\substack{\frac{1}{29} \leqslant t_1 < \frac{1}{2} \\ \frac{1}{29} \leqslant t_2 < \min \left(t_1, \frac{1}{2}(1 - t_1) \right)  \\ (t_1, t_2) \notin I \\ t_1 > \frac{31}{87},\ t_1 + 2 t_2 \leqslant \frac{59}{87} }} S\left(\mathcal{A}_{p_1 p_2}, p_2\right) \\
\nonumber &+ \sum_{\substack{\frac{1}{29} \leqslant t_1 < \frac{1}{2} \\ \frac{1}{29} \leqslant t_2 < \min \left(t_1, \frac{1}{2}(1 - t_1) \right)  \\ (t_1, t_2) \notin I \\ t_1 > \frac{31}{87},\ t_1 + 2 t_2 > \frac{59}{87} \\ t_2 \leqslant \max\left(\frac{1-t_1}{3}, \frac{t_1}{2}\right)}} S\left(\mathcal{A}_{p_1 p_2}, p_2\right) + \sum_{\substack{\frac{1}{29} \leqslant t_1 < \frac{1}{2} \\ \frac{1}{29} \leqslant t_2 < \min \left(t_1, \frac{1}{2}(1 - t_1) \right)  \\ (t_1, t_2) \notin I \\ t_1 > \frac{31}{87},\ t_1 + 2 t_2 > \frac{59}{87} \\ t_2 > \max\left(\frac{1-t_1}{3}, \frac{t_1}{2}\right)}} S\left(\mathcal{A}_{p_1 p_2}, p_2\right) \\
\nonumber =&\ \sum_{\substack{\frac{1}{29} \leqslant t_1 < \frac{1}{2} \\ \frac{1}{29} \leqslant t_2 < \min \left(t_1, \frac{1}{2}(1 - t_1) \right)  \\ (t_1, t_2) \in I}} S\left(\mathcal{A}_{p_1 p_2}, p_2\right) + \sum_{\substack{\frac{1}{29} \leqslant t_1 < \frac{1}{2} \\ \frac{1}{29} \leqslant t_2 < \min \left(t_1, \frac{1}{2}(1 - t_1) \right)  \\ (t_1, t_2) \in J_1}} S\left(\mathcal{A}_{p_1 p_2}, p_2\right) \\
\nonumber &+ \sum_{\substack{\frac{1}{29} \leqslant t_1 < \frac{1}{2} \\ \frac{1}{29} \leqslant t_2 < \min \left(t_1, \frac{1}{2}(1 - t_1) \right)  \\ (t_1, t_2) \in J_2}} S\left(\mathcal{A}_{p_1 p_2}, p_2\right) + \sum_{\substack{\frac{1}{29} \leqslant t_1 < \frac{1}{2} \\ \frac{1}{29} \leqslant t_2 < \min \left(t_1, \frac{1}{2}(1 - t_1) \right)  \\ (t_1, t_2) \in J_3}} S\left(\mathcal{A}_{p_1 p_2}, p_2\right) \\
\nonumber &+ \sum_{\substack{\frac{1}{29} \leqslant t_1 < \frac{1}{2} \\ \frac{1}{29} \leqslant t_2 < \min \left(t_1, \frac{1}{2}(1 - t_1) \right)  \\ (t_1, t_2) \in J_4}} S\left(\mathcal{A}_{p_1 p_2}, p_2\right) + \sum_{\substack{\frac{1}{29} \leqslant t_1 < \frac{1}{2} \\ \frac{1}{29} \leqslant t_2 < \min \left(t_1, \frac{1}{2}(1 - t_1) \right)  \\ (t_1, t_2) \in J_5}} S\left(\mathcal{A}_{p_1 p_2}, p_2\right) \\
=&\ S_{31} + S_{32} + S_{33} + S_{34} + S_{35} + S_{36}.
\end{align}

$S_{31}$ has an asymptotic formula. For $S_{32}$, we cannot decompose further but have to discard the whole region giving the loss
\begin{equation}
L_{32} := \int_{\frac{1}{29}}^{\frac{1}{2}} \int_{\frac{1}{29}}^{\min\left(t_1, \frac{1-t_1}{2}\right)} \mathbbm{1}_{(t_1, t_2) \in J_1} \frac{\omega\left(\frac{1 - t_1 - t_2}{t_2}\right)}{t_1 t_2^2} d t_2 d t_1 < 0.397685.
\end{equation}
For $S_{33}$ we can use Buchstab’s identity to get
\begin{align}
\nonumber S_{33} =&\ \sum_{\substack{\frac{1}{29} \leqslant t_1 < \frac{1}{2} \\ \frac{1}{29} \leqslant t_2 < \min \left(t_1, \frac{1}{2}(1 - t_1) \right)  \\ (t_1, t_2) \in J_2}} S\left(\mathcal{A}_{p_1 p_2}, p_2\right) \\
\nonumber =&\ \sum_{\substack{\frac{1}{29} \leqslant t_1 < \frac{1}{2} \\ \frac{1}{29} \leqslant t_2 < \min \left(t_1, \frac{1}{2}(1 - t_1) \right)  \\ (t_1, t_2) \in J_2}} S\left(\mathcal{A}_{p_1 p_2}, x^{\frac{1}{29}}\right) - \sum_{\substack{\frac{1}{29} \leqslant t_1 < \frac{1}{2} \\ \frac{1}{29} \leqslant t_2 < \min \left(t_1, \frac{1}{2}(1 - t_1) \right)  \\ (t_1, t_2) \in J_2 \\ \frac{1}{29} \leqslant t_3 < \min \left(t_2, \frac{1}{2}(1 - t_1 - t_2) \right) }} S\left(\mathcal{A}_{p_1 p_2 p_3}, x^{\frac{1}{29}}\right) \\
\nonumber &+ \sum_{\substack{\frac{1}{29} \leqslant t_1 < \frac{1}{2} \\ \frac{1}{29} \leqslant t_2 < \min \left(t_1, \frac{1}{2}(1 - t_1) \right)  \\ (t_1, t_2) \in J_2 \\ \frac{1}{29} \leqslant t_3 < \min \left(t_2, \frac{1}{2}(1 - t_1 - t_2) \right) \\ \frac{1}{29} \leqslant t_4 < \min \left(t_3, \frac{1}{2}(1 - t_1 - t_2 - t_3) \right) \\ (t_1, t_2, t_3, t_4) \text{ can be partitioned into } (m, n) \in I }} S\left(\mathcal{A}_{p_1 p_2 p_3 p_4}, p_4 \right) \\
\nonumber &+ \sum_{\substack{\frac{1}{29} \leqslant t_1 < \frac{1}{2} \\ \frac{1}{29} \leqslant t_2 < \min \left(t_1, \frac{1}{2}(1 - t_1) \right)  \\ (t_1, t_2) \in J_2 \\ \frac{1}{29} \leqslant t_3 < \min \left(t_2, \frac{1}{2}(1 - t_1 - t_2) \right) \\ \frac{1}{29} \leqslant t_4 < \min \left(t_3, \frac{1}{2}(1 - t_1 - t_2 - t_3) \right) \\ (t_1, t_2, t_3, t_4) \text{ cannot be partitioned into } (m, n) \in I }} S\left(\mathcal{A}_{p_1 p_2 p_3 p_4}, p_4 \right) \\
=&\ S_{331} - S_{332} + S_{333} + S_{334}.
\end{align}
We have asymptotic formulas for $S_{331}$--$S_{333}$. For the remaining $S_{334}$, we have two ways to get more possible savings: One way is to use Buchstab's identity twice more for some parts if we have $t_1 + t_2 + t_3 + 2 t_4 \leqslant \frac{59}{87}$. Another way is to use Buchstab's identity in reverse to make almost-primes visible. The details of further decompositions are similar to those in \cite{LRB052}. Combining the cases above we get a loss from $S_{33}$ of
\begin{align}
\nonumber & \left( \int_{(t_1, t_2, t_3, t_4) \in J_{331}} \frac{\omega \left(\frac{1 - t_1 - t_2 - t_3 - t_4}{t_4}\right)}{t_1 t_2 t_3 t_4^2} d t_4 d t_3 d t_2 d t_1 \right) \\
\nonumber -& \left( \int_{(t_1, t_2, t_3, t_4, t_5) \in J_{332}} \frac{\omega \left(\frac{1 - t_1 - t_2 - t_3 - t_4 - t_5}{t_5}\right)}{t_1 t_2 t_3 t_4 t_5^2} d t_5 d t_4 d t_3 d t_2 d t_1 \right) \\
\nonumber +& \left( \int_{(t_1, t_2, t_3, t_4, t_5, t_6) \in J_{333}} \frac{\omega_1 \left(\frac{1 - t_1 - t_2 - t_3 - t_4 - t_5 - t_6}{t_6}\right)}{t_1 t_2 t_3 t_4 t_5 t_6^2} d t_6 d t_5 d t_4 d t_3 d t_2 d t_1 \right) \\
\nonumber +& \left( \int_{(t_1, t_2, t_3, t_4, t_5, t_6, t_7, t_8) \in J_{334}} \frac{\max \left(\frac{t_8}{1 - t_1 - t_2 - t_3 - t_4 - t_5 - t_6 - t_7 - t_8}, 0.5672\right)}{t_1 t_2 t_3 t_4 t_5 t_6 t_7 t_8^2} d t_8 d t_7 d t_6 d t_5 d t_4 d t_3 d t_2 d t_1 \right) \\
\nonumber =&\ L_{331} - L_{332} + L_{333} + L_{334}, \\
\nonumber <&\ 0.111391 - 0.021501 + 0.001491 + 0.000002 \\
=&\ 0.091383,
\end{align}
where
\begin{align}
\nonumber J_{331}(t_1, t_2, t_3, t_4) :=&\ \left\{ (t_1, t_2) \in J_{2}, \right. \\
\nonumber & \quad \frac{1}{29} \leqslant t_3 < \min\left(t_2, \frac{1}{2}(1-t_1-t_2)\right), \\ 
\nonumber & \quad \frac{1}{29} \leqslant t_4 < \min\left(t_3, \frac{1}{2}(1-t_1-t_2-t_3) \right), \\
\nonumber & \quad (t_1, t_2, t_3, t_4) \text{ cannot be partitioned into } (m, n) \in I, \\
\nonumber & \quad t_1 + t_2 + t_3 + 2 t_4 > \frac{59}{87}, \\
\nonumber & \left. \quad \frac{1}{29} \leqslant t_1 < \frac{1}{2},\ \frac{1}{29} \leqslant t_2 < \min\left(t_1, \frac{1}{2}(1-t_1) \right) \right\}, \\
\nonumber J_{332}(t_1, t_2, t_3, t_4, t_5) :=&\ \left\{ (t_1, t_2) \in J_{2}, \right. \\
\nonumber & \quad \frac{1}{29} \leqslant t_3 < \min\left(t_2, \frac{1}{2}(1-t_1-t_2)\right), \\ 
\nonumber & \quad \frac{1}{29} \leqslant t_4 < \min\left(t_3, \frac{1}{2}(1-t_1-t_2-t_3) \right), \\
\nonumber & \quad (t_1, t_2, t_3, t_4) \text{ cannot be partitioned into } (m, n) \in I, \\
\nonumber & \quad t_1 + t_2 + t_3 + 2 t_4 > \frac{59}{87}, \\
\nonumber & \quad t_4 < t_5 < \frac{1}{2}(1-t_1-t_2-t_3-t_4), \\
\nonumber & \quad (t_1, t_2, t_3, t_4, t_5) \text{ can be partitioned into } (m, n) \in I, \\
\nonumber & \left. \quad \frac{1}{29} \leqslant t_1 < \frac{1}{2},\ \frac{1}{29} \leqslant t_2 < \min\left(t_1, \frac{1}{2}(1-t_1) \right) \right\}, \\
\nonumber J_{333}(t_1, t_2, t_3, t_4, t_5, t_6) :=&\ \left\{ (t_1, t_2) \in J_{2}, \right. \\
\nonumber & \quad \frac{1}{29} \leqslant t_3 < \min\left(t_2, \frac{1}{2}(1-t_1-t_2)\right), \\ 
\nonumber & \quad \frac{1}{29} \leqslant t_4 < \min\left(t_3, \frac{1}{2}(1-t_1-t_2-t_3) \right), \\
\nonumber & \quad (t_1, t_2, t_3, t_4) \text{ cannot be partitioned into } (m, n) \in I, \\
\nonumber & \quad t_1 + t_2 + t_3 + 2 t_4 \leqslant \frac{59}{87}, \\
\nonumber & \quad \frac{1}{29} \leqslant t_5 < \min\left(t_4, \frac{1}{2}(1-t_1-t_2-t_3-t_4) \right), \\
\nonumber & \quad \frac{1}{29} \leqslant t_6 < \min\left(t_5, \frac{1}{2}(1-t_1-t_2-t_3-t_4-t_5) \right),\\
\nonumber & \quad (t_1, t_2, t_3, t_4, t_5, t_6) \text{ cannot be partitioned into } (m, n) \in I, \\
\nonumber & \quad t_1 + t_2 + t_3 + t_4 + t_5 + 2 t_6 > \frac{59}{87}, \\
\nonumber & \left. \quad \frac{1}{29} \leqslant t_1 < \frac{1}{2},\ \frac{1}{29} \leqslant t_2 < \min\left(t_1, \frac{1}{2}(1-t_1) \right) \right\}, \\
\nonumber J_{334}(t_1, t_2, t_3, t_4, t_5, t_6, t_7, t_8) :=&\ \left\{ (t_1, t_2) \in J_{2}, \right. \\
\nonumber & \quad \frac{1}{29} \leqslant t_3 < \min\left(t_2, \frac{1}{2}(1-t_1-t_2)\right), \\ 
\nonumber & \quad \frac{1}{29} \leqslant t_4 < \min\left(t_3, \frac{1}{2}(1-t_1-t_2-t_3) \right), \\
\nonumber & \quad (t_1, t_2, t_3, t_4) \text{ cannot be partitioned into } (m, n) \in I, \\
\nonumber & \quad t_1 + t_2 + t_3 + 2 t_4 \leqslant \frac{59}{87}, \\
\nonumber & \quad \frac{1}{29} \leqslant t_5 < \min\left(t_4, \frac{1}{2}(1-t_1-t_2-t_3-t_4) \right), \\
\nonumber & \quad \frac{1}{29} \leqslant t_6 < \min\left(t_5, \frac{1}{2}(1-t_1-t_2-t_3-t_4-t_5) \right),\\
\nonumber & \quad (t_1, t_2, t_3, t_4, t_5, t_6) \text{ cannot be partitioned into } (m, n) \in I, \\
\nonumber & \quad t_1 + t_2 + t_3 + t_4 + t_5 + 2 t_6 \leqslant \frac{59}{87}, \\
\nonumber & \quad \frac{1}{29} \leqslant t_7 < \min\left(t_5, \frac{1}{2}(1-t_1-t_2-t_3-t_4-t_5-t_6) \right),\\
\nonumber & \quad \frac{1}{29} \leqslant t_8 < \min\left(t_5, \frac{1}{2}(1-t_1-t_2-t_3-t_4-t_5-t_6-t_7) \right),\\
\nonumber & \quad (t_1, t_2, t_3, t_4, t_5, t_6, t_7, t_8) \text{ cannot be partitioned into } (m, n) \in I, \\
\nonumber & \left. \quad \frac{1}{29} \leqslant t_1 < \frac{1}{2},\ \frac{1}{29} \leqslant t_2 < \min\left(t_1, \frac{1}{2}(1-t_1) \right) \right\}.
\end{align}

Next we shall decompose $S_{34}$. By the same process as the decomposition of $S_{33}$ above, we can reach a four-dimentional sum
$$
\sum_{\substack{\frac{1}{29} \leqslant t_1 < \frac{1}{2} \\ \frac{1}{29} \leqslant t_2 < \min \left(t_1, \frac{1}{2}(1 - t_1) \right)  \\ (t_1, t_2) \in J_2 \\ \frac{1}{29} \leqslant t_3 < \min \left(t_2, \frac{1}{2}(1 - t_1 - t_2) \right) \\ \frac{1}{29} \leqslant t_4 < \min \left(t_3, \frac{1}{2}(1 - t_1 - t_2 - t_3) \right) \\ (t_1, t_2, t_3, t_4) \text{ cannot be partitioned into } (m, n) \in I }} S\left(\mathcal{A}_{p_1 p_2 p_3 p_4}, p_4 \right).
$$
We divide it into three parts:
\begin{align}
\nonumber & \sum_{\substack{\frac{1}{29} \leqslant t_1 < \frac{1}{2} \\ \frac{1}{29} \leqslant t_2 < \min \left(t_1, \frac{1}{2}(1 - t_1) \right)  \\ (t_1, t_2) \in J_2 \\ \frac{1}{29} \leqslant t_3 < \min \left(t_2, \frac{1}{2}(1 - t_1 - t_2) \right) \\ \frac{1}{29} \leqslant t_4 < \min \left(t_3, \frac{1}{2}(1 - t_1 - t_2 - t_3) \right) \\ (t_1, t_2, t_3, t_4) \text{ cannot be partitioned into } (m, n) \in I }} S\left(\mathcal{A}_{p_1 p_2 p_3 p_4}, p_4 \right) \\
\nonumber =&\ \sum_{\substack{\frac{1}{29} \leqslant t_1 < \frac{1}{2} \\ \frac{1}{29} \leqslant t_2 < \min \left(t_1, \frac{1}{2}(1 - t_1) \right)  \\ (t_1, t_2) \in J_2 \\ \frac{1}{29} \leqslant t_3 < \min \left(t_2, \frac{1}{2}(1 - t_1 - t_2) \right) \\ \frac{1}{29} \leqslant t_4 < \min \left(t_3, \frac{1}{2}(1 - t_1 - t_2 - t_3) \right) \\ (t_1, t_2, t_3, t_4) \text{ cannot be partitioned into } (m, n) \in I \\ t_1 + t_2 + t_3 + t_4 > \frac{59}{87} }} S\left(\mathcal{A}_{p_1 p_2 p_3 p_4}, p_4 \right) \\
\nonumber &+ \sum_{\substack{\frac{1}{29} \leqslant t_1 < \frac{1}{2} \\ \frac{1}{29} \leqslant t_2 < \min \left(t_1, \frac{1}{2}(1 - t_1) \right)  \\ (t_1, t_2) \in J_2 \\ \frac{1}{29} \leqslant t_3 < \min \left(t_2, \frac{1}{2}(1 - t_1 - t_2) \right) \\ \frac{1}{29} \leqslant t_4 < \min \left(t_3, \frac{1}{2}(1 - t_1 - t_2 - t_3) \right) \\ (t_1, t_2, t_3, t_4) \text{ cannot be partitioned into } (m, n) \in I \\ t_1 + t_2 + t_3 + 2 t_4 \leqslant \frac{59}{87} }} S\left(\mathcal{A}_{p_1 p_2 p_3 p_4}, p_4 \right) \\
&+ \sum_{\substack{\frac{1}{29} \leqslant t_1 < \frac{1}{2} \\ \frac{1}{29} \leqslant t_2 < \min \left(t_1, \frac{1}{2}(1 - t_1) \right)  \\ (t_1, t_2) \in J_2 \\ \frac{1}{29} \leqslant t_3 < \min \left(t_2, \frac{1}{2}(1 - t_1 - t_2) \right) \\ \frac{1}{29} \leqslant t_4 < \min \left(t_3, \frac{1}{2}(1 - t_1 - t_2 - t_3) \right) \\ (t_1, t_2, t_3, t_4) \text{ cannot be partitioned into } (m, n) \in I \\ t_1 + t_2 + t_3 + 2 t_4 > \frac{59}{87} \\ t_1 + t_2 + t_3 + t_4 \leqslant \frac{59}{87} }} S\left(\mathcal{A}_{p_1 p_2 p_3 p_4}, p_4 \right).
\end{align}
We can only use Buchstab's identity in reverse to make more savings for the first sum on the right-hand side. For the second sum in (8), we can perform a straightforward decomposition to get a six-dimensional sum
\begin{equation}
\sum_{\substack{\frac{1}{29} \leqslant t_1 < \frac{1}{2} \\ \frac{1}{29} \leqslant t_2 < \min \left(t_1, \frac{1}{2}(1 - t_1) \right)  \\ (t_1, t_2) \in J_3 \\ \frac{1}{29} \leqslant t_3 < \min \left(t_2, \frac{1}{2}(1 - t_1 - t_2) \right) \\ \frac{1}{29} \leqslant t_4 < \min \left(t_3, \frac{1}{2}(1 - t_1 - t_2 - t_3) \right) \\ (t_1, t_2, t_3, t_4) \text{ cannot be partitioned into } (m, n) \in I \\ t_1 + t_2 + t_3 + 2 t_4 \leqslant \frac{59}{87} \\ \frac{1}{29} \leqslant t_5 < \min\left(t_4, \frac{1}{2}(1-t_1-t_2-t_3-t_4) \right) \\ \frac{1}{29} \leqslant t_6 < \min\left(t_5, \frac{1}{2}(1-t_1-t_2-t_3-t_4-t_5)\right) \\ (t_1, t_2, t_3, t_4, t_5, t_6) \text{ cannot be partitioned into } (m, n) \in I \\ t_1 + t_2 + t_3 + t_4 + t_5 + 2 t_6 > \frac{59}{87} }} S\left(\mathcal{A}_{p_1 p_2 p_3 p_4 p_5 p_6}, p_6 \right)
\end{equation}
and an eight-dimensional sum
\begin{equation}
\sum_{\substack{\frac{1}{29} \leqslant t_1 < \frac{1}{2} \\ \frac{1}{29} \leqslant t_2 < \min \left(t_1, \frac{1}{2}(1 - t_1) \right)  \\ (t_1, t_2) \in J_3 \\ \frac{1}{29} \leqslant t_3 < \min \left(t_2, \frac{1}{2}(1 - t_1 - t_2) \right) \\ \frac{1}{29} \leqslant t_4 < \min \left(t_3, \frac{1}{2}(1 - t_1 - t_2 - t_3) \right) \\ (t_1, t_2, t_3, t_4) \text{ cannot be partitioned into } (m, n) \in I \\ t_1 + t_2 + t_3 + 2 t_4 \leqslant \frac{59}{87} \\ \frac{1}{29} \leqslant t_5 < \min\left(t_4, \frac{1}{2}(1-t_1-t_2-t_3-t_4) \right) \\ \frac{1}{29} \leqslant t_6 < \min\left(t_5, \frac{1}{2}(1-t_1-t_2-t_3-t_4-t_5)\right) \\ (t_1, t_2, t_3, t_4, t_5, t_6) \text{ cannot be partitioned into } (m, n) \in I \\ t_1 + t_2 + t_3 + t_4 + t_5 + 2 t_6 \leqslant \frac{59}{87} \\ \frac{1}{29} \leqslant t_7 < \min\left(t_6, \frac{1}{2}(1-t_1-t_2-t_3-t_4-t_5-t_6)\right) \\ \frac{1}{29} \leqslant t_8 < \min\left(t_7, \frac{1}{2}(1-t_1-t_2-t_3-t_4-t_5-t_6-t_7)\right) \\ (t_1, t_2, t_3, t_4, t_5, t_6, t_7, t_8) \text{ cannot be partitioned into } (m, n) \in I }} S\left(\mathcal{A}_{p_1 p_2 p_3 p_4 p_5 p_6 p_7 p_8}, p_8 \right).
\end{equation}
We can also use reversed Buchstab's identity to gain a five-dimensional saving. For the last sum, we cannot decompose it in a straightforward way. However, we can perform a role-reversal to get
\begin{align}
\nonumber & \sum_{\substack{\frac{1}{29} \leqslant t_1 < \frac{1}{2} \\ \frac{1}{29} \leqslant t_2 < \min \left(t_1, \frac{1}{2}(1 - t_1) \right)  \\ (t_1, t_2) \in J_2 \\ \frac{1}{29} \leqslant t_3 < \min \left(t_2, \frac{1}{2}(1 - t_1 - t_2) \right) \\ \frac{1}{29} \leqslant t_4 < \min \left(t_3, \frac{1}{2}(1 - t_1 - t_2 - t_3) \right) \\ (t_1, t_2, t_3, t_4) \text{ cannot be partitioned into } (m, n) \in I \\ t_1 + t_2 + t_3 + 2 t_4 > \frac{59}{87} \\ t_1 + t_2 + t_3 + t_4 \leqslant \frac{59}{87} }} S\left(\mathcal{A}_{p_1 p_2 p_3 p_4}, p_4 \right) \\
\nonumber =&\ \sum_{\substack{\frac{1}{29} \leqslant t_1 < \frac{1}{2} \\ \frac{1}{29} \leqslant t_2 < \min \left(t_1, \frac{1}{2}(1 - t_1) \right)  \\ (t_1, t_2) \in J_2 \\ \frac{1}{29} \leqslant t_3 < \min \left(t_2, \frac{1}{2}(1 - t_1 - t_2) \right) \\ \frac{1}{29} \leqslant t_4 < \min \left(t_3, \frac{1}{2}(1 - t_1 - t_2 - t_3) \right) \\ (t_1, t_2, t_3, t_4) \text{ cannot be partitioned into } (m, n) \in I \\ t_1 + t_2 + t_3 + 2 t_4 > \frac{59}{87} \\ t_1 + t_2 + t_3 + t_4 \leqslant \frac{59}{87} }} S\left(\mathcal{A}_{p_1 p_2 p_3 p_4}, x^{\frac{1}{29}} \right) \\
\nonumber &- \sum_{\substack{\frac{1}{29} \leqslant t_1 < \frac{1}{2} \\ \frac{1}{29} \leqslant t_2 < \min \left(t_1, \frac{1}{2}(1 - t_1) \right)  \\ (t_1, t_2) \in J_2 \\ \frac{1}{29} \leqslant t_3 < \min \left(t_2, \frac{1}{2}(1 - t_1 - t_2) \right) \\ \frac{1}{29} \leqslant t_4 < \min \left(t_3, \frac{1}{2}(1 - t_1 - t_2 - t_3) \right) \\ (t_1, t_2, t_3, t_4) \text{ cannot be partitioned into } (m, n) \in I \\ t_1 + t_2 + t_3 + 2 t_4 > \frac{59}{87} \\ t_1 + t_2 + t_3 + t_4 \leqslant \frac{59}{87} \\ \frac{1}{29} \leqslant t_5 < \min \left(t_4, \frac{1}{2}(1 - t_1 - t_2 - t_3 - t_4) \right) \\ (t_1, t_2, t_3, t_4, t_5) \text{ can be partitioned into } (m, n) \in I }} S\left(\mathcal{A}_{p_1 p_2 p_3 p_4 p_5}, p_{5} \right) \\
\nonumber &- \sum_{\substack{\frac{1}{29} \leqslant t_1 < \frac{1}{2} \\ \frac{1}{29} \leqslant t_2 < \min \left(t_1, \frac{1}{2}(1 - t_1) \right)  \\ (t_1, t_2) \in J_2 \\ \frac{1}{29} \leqslant t_3 < \min \left(t_2, \frac{1}{2}(1 - t_1 - t_2) \right) \\ \frac{1}{29} \leqslant t_4 < \min \left(t_3, \frac{1}{2}(1 - t_1 - t_2 - t_3) \right) \\ (t_1, t_2, t_3, t_4) \text{ cannot be partitioned into } (m, n) \in I \\ t_1 + t_2 + t_3 + 2 t_4 > \frac{59}{87} \\ t_1 + t_2 + t_3 + t_4 \leqslant \frac{59}{87} \\ \frac{1}{29} \leqslant t_5 < \min \left(t_4, \frac{1}{2}(1 - t_1 - t_2 - t_3 - t_4) \right) \\ (t_1, t_2, t_3, t_4, t_5) \text{ cannot be partitioned into } (m, n) \in I }} S\left(\mathcal{A}_{\gamma p_2 p_3 p_4 p_5}, x^{\frac{1}{29}} \right) \\
\nonumber &+ \sum_{\substack{\frac{1}{29} \leqslant t_1 < \frac{1}{2} \\ \frac{1}{29} \leqslant t_2 < \min \left(t_1, \frac{1}{2}(1 - t_1) \right)  \\ (t_1, t_2) \in J_2 \\ \frac{1}{29} \leqslant t_3 < \min \left(t_2, \frac{1}{2}(1 - t_1 - t_2) \right) \\ \frac{1}{29} \leqslant t_4 < \min \left(t_3, \frac{1}{2}(1 - t_1 - t_2 - t_3) \right) \\ (t_1, t_2, t_3, t_4) \text{ cannot be partitioned into } (m, n) \in I \\ t_1 + t_2 + t_3 + 2 t_4 > \frac{59}{87} \\ t_1 + t_2 + t_3 + t_4 \leqslant \frac{59}{87} \\ \frac{1}{29} \leqslant t_5 < \min \left(t_4, \frac{1}{2}(1 - t_1 - t_2 - t_3 - t_4) \right) \\ (t_1, t_2, t_3, t_4, t_5) \text{ cannot be partitioned into } (m, n) \in I \\ \frac{1}{29} \leqslant t_6 < \frac{1}{2}t_1 \\ (1-t_1-t_2-t_3-t_4-t_5, t_2, t_3, t_4, t_5, t_6) \text{ can be partitioned into } (m, n) \in I }} S\left(\mathcal{A}_{\gamma p_2 p_3 p_4 p_5 p_6}, p_6 \right), \\
&+ \sum_{\substack{\frac{1}{29} \leqslant t_1 < \frac{1}{2} \\ \frac{1}{29} \leqslant t_2 < \min \left(t_1, \frac{1}{2}(1 - t_1) \right)  \\ (t_1, t_2) \in J_2 \\ \frac{1}{29} \leqslant t_3 < \min \left(t_2, \frac{1}{2}(1 - t_1 - t_2) \right) \\ \frac{1}{29} \leqslant t_4 < \min \left(t_3, \frac{1}{2}(1 - t_1 - t_2 - t_3) \right) \\ (t_1, t_2, t_3, t_4) \text{ cannot be partitioned into } (m, n) \in I \\ t_1 + t_2 + t_3 + 2 t_4 > \frac{59}{87} \\ t_1 + t_2 + t_3 + t_4 \leqslant \frac{59}{87} \\ \frac{1}{29} \leqslant t_5 < \min \left(t_4, \frac{1}{2}(1 - t_1 - t_2 - t_3 - t_4) \right) \\ (t_1, t_2, t_3, t_4, t_5) \text{ cannot be partitioned into } (m, n) \in I \\ \frac{1}{29} \leqslant t_6 < \frac{1}{2}t_1 \\ (1-t_1-t_2-t_3-t_4-t_5, t_2, t_3, t_4, t_5, t_6) \text{ cannot be partitioned into } (m, n) \in I }} S\left(\mathcal{A}_{\gamma p_2 p_3 p_4 p_5 p_6}, p_6 \right),
\end{align}
where $\gamma \sim x^{1-t_1-t_2-t_3-t_4-t_5}$ and $\left(\gamma , P(p_5)\right)=1$. Since $t_1 + t_2 + t_3 + t_4 \leqslant \frac{59}{87}$ and $(1-t_1-t_2-t_3-t_4-t_5) + t_2 + t_3 + t_4 + t_5 = (1-t_1) \leqslant (1 - \frac{31}{87}) = \frac{56}{87}$, we can give asymptotic formulas for all sums on the right hand side except for the last sum. Note that the last sum in (11) counts numbers with two almost-prime variables, we can make further decompositions on either variable, leading to two eight-dimensional sums
\begin{equation}
\sum_{\substack{\frac{1}{29} \leqslant t_1 < \frac{1}{2} \\ \frac{1}{29} \leqslant t_2 < \min \left(t_1, \frac{1}{2}(1 - t_1) \right)  \\ (t_1, t_2) \in J_2 \\ \frac{1}{29} \leqslant t_3 < \min \left(t_2, \frac{1}{2}(1 - t_1 - t_2) \right) \\ \frac{1}{29} \leqslant t_4 < \min \left(t_3, \frac{1}{2}(1 - t_1 - t_2 - t_3) \right) \\ (t_1, t_2, t_3, t_4) \text{ cannot be partitioned into } (m, n) \in I \\ t_1 + t_2 + t_3 + 2 t_4 > \frac{59}{87} \\ t_1 + t_2 + t_3 + t_4 \leqslant \frac{59}{87} \\ \frac{1}{29} \leqslant t_5 < \min \left(t_4, \frac{1}{2}(1 - t_1 - t_2 - t_3 - t_4) \right) \\ (t_1, t_2, t_3, t_4, t_5) \text{ cannot be partitioned into } (m, n) \in I \\ \frac{1}{29} \leqslant t_6 < \frac{1}{2}t_1 \\ (1-t_1-t_2-t_3-t_4-t_5, t_2, t_3, t_4, t_5, t_6) \text{ cannot be partitioned into } (m, n) \in I \\ (1-t_1-t_2-t_3-t_4-t_5) + t_2 + t_3 + t_4 + t_5 + 2 t_6 \leqslant \frac{59}{87} \\ \frac{1}{29} \leqslant t_7 < \min \left(t_6, \frac{1}{2}(t_1 - t_6) \right) \\ (1-t_1-t_2-t_3-t_4-t_5, t_2, t_3, t_4, t_5, t_6, t_7) \text{ cannot be partitioned into } (m, n) \in I \\ \frac{1}{29} \leqslant t_8 < \min \left(t_7, \frac{1}{2}(t_1 - t_6 - t_7) \right) \\ (1-t_1-t_2-t_3-t_4-t_5, t_2, t_3, t_4, t_5, t_6, t_7, t_8) \text{ cannot be partitioned into } (m, n) \in I }} S\left(\mathcal{A}_{\gamma p_2 p_3 p_4 p_5 p_6 p_7 p_8}, p_8 \right)
\end{equation}
and
\begin{equation}
\sum_{\substack{\frac{1}{29} \leqslant t_1 < \frac{1}{2} \\ \frac{1}{29} \leqslant t_2 < \min \left(t_1, \frac{1}{2}(1 - t_1) \right)  \\ (t_1, t_2) \in J_2 \\ \frac{1}{29} \leqslant t_3 < \min \left(t_2, \frac{1}{2}(1 - t_1 - t_2) \right) \\ \frac{1}{29} \leqslant t_4 < \min \left(t_3, \frac{1}{2}(1 - t_1 - t_2 - t_3) \right) \\ (t_1, t_2, t_3, t_4) \text{ cannot be partitioned into } (m, n) \in I \\ t_1 + t_2 + t_3 + 2 t_4 > \frac{59}{87} \\ t_1 + t_2 + t_3 + t_4 \leqslant \frac{59}{87} \\ \frac{1}{29} \leqslant t_5 < \min \left(t_4, \frac{1}{2}(1 - t_1 - t_2 - t_3 - t_4) \right) \\ (t_1, t_2, t_3, t_4, t_5) \text{ cannot be partitioned into } (m, n) \in I \\ \frac{1}{29} \leqslant t_6 < \frac{1}{2}t_1 \\ (1-t_1-t_2-t_3-t_4-t_5, t_2, t_3, t_4, t_5, t_6) \text{ cannot be partitioned into } (m, n) \in I \\ (1-t_1-t_2-t_3-t_4-t_5) + t_2 + t_3 + t_4 + t_5 + 2 t_6 > \frac{59}{87} \\ (t_1 - t_6) + t_2 + t_3 + t_4 + 2 t_5 + t_6 \leqslant \frac{59}{87} \\ \frac{1}{29} \leqslant t_7 < \min \left(t_5, \frac{1}{2}(1 - t_1 - t_2 - t_3 - t_4 - t_5) \right) \\ (t_1 - t_6, t_2, t_3, t_4, t_5, t_6, t_7) \text{ cannot be partitioned into } (m, n) \in I \\ \frac{1}{29} \leqslant t_8 < \min \left(t_7, \frac{1}{2}(1 - t_1 - t_2 - t_3 - t_4 - t_5 - t_7) \right) \\ (t_1 - t_6, t_2, t_3, t_4, t_5, t_6, t_7, t_8) \text{ cannot be partitioned into } (m, n) \in I }} S\left(\mathcal{A}_{\gamma_1 p_2 p_3 p_4 p_5 p_6 p_7 p_8}, p_8 \right),
\end{equation}
where $\gamma_1 \sim x^{t_1-t_6}$ and $\left(\gamma_1, P(p_6)\right)=1$. We refer the readers to \cite{LiRunbo1215} and \cite{LRB052} for more applications of role-reversals. Combining the cases above we get a loss from $S_{34}$ of
\begin{align}
\nonumber & \left( \int_{\frac{1}{29}}^{\frac{1}{2}} \int_{(t_1, t_2, t_3, t_4) \in J_{341}} \frac{\omega \left(\frac{1 - t_1 - t_2 - t_3 - t_4}{t_4}\right)}{t_1 t_2 t_3 t_4^2} d t_4 d t_3 d t_2 d t_1 \right) \\
\nonumber -& \left( \int_{(t_1, t_2, t_3, t_4, t_5) \in J_{342}} \frac{\omega \left(\frac{1 - t_1 - t_2 - t_3 - t_4 - t_5}{t_5}\right)}{t_1 t_2 t_3 t_4 t_5^2} d t_5 d t_4 d t_3 d t_2 d t_1 \right) \\
\nonumber +& \left( \int_{(t_1, t_2, t_3, t_4, t_5, t_6) \in J_{343}} \frac{\omega_1 \left(\frac{1 - t_1 - t_2 - t_3 - t_4 - t_5 - t_6}{t_6}\right)}{t_1 t_2 t_3 t_4 t_5 t_6^2} d t_6 d t_5 d t_4 d t_3 d t_2 d t_1 \right) \\
\nonumber +& \left( \int_{(t_1, t_2, t_3, t_4, t_5, t_6, t_7, t_8) \in J_{344}} \frac{\max \left(\frac{t_8}{1 - t_1 - t_2 - t_3 - t_4 - t_5 - t_6 - t_7 - t_8}, 0.5672\right)}{t_1 t_2 t_3 t_4 t_5 t_6 t_7 t_8^2} d t_8 d t_7 d t_6 d t_5 d t_4 d t_3 d t_2 d t_1 \right) \\
\nonumber +& \left( \int_{(t_1, t_2, t_3, t_4, t_5, t_6) \in J_{345}} \frac{\omega_1 \left(\frac{1 - t_1 - t_2 - t_3 - t_4 - t_5}{t_5}\right) \omega_1 \left(\frac{t_1 - t_6}{t_6}\right)}{t_2 t_3 t_4 t_5^2 t_6^2} d t_6 d t_5 d t_4 d t_3 d t_2 d t_1 \right) \\
\nonumber +& \left( \int_{(t_1, t_2, t_3, t_4, t_5, t_6, t_7, t_8) \in J_{346}} \frac{\omega_1 \left(\frac{1 - t_1 - t_2 - t_3 - t_4 - t_5}{t_5}\right) \omega_1 \left(\frac{t_1 - t_6 - t_7 - t_8}{t_8}\right)}{t_2 t_3 t_4 t_5^2 t_6 t_7 t_8^2} d t_8 d t_7 d t_6 d t_5 d t_4 d t_3 d t_2 d t_1 \right) \\
\nonumber +& \left( \int_{(t_1, t_2, t_3, t_4, t_5, t_6, t_7, t_8) \in J_{347}} \frac{\omega_1 \left(\frac{1 - t_1 - t_2 - t_3 - t_4 - t_5 - t_7 - t_8}{t_8}\right) \omega_1 \left(\frac{t_1 - t_6}{t_6}\right)}{t_2 t_3 t_4 t_5 t_6^2 t_7 t_8^2} d t_8 d t_7 d t_6 d t_5 d t_4 d t_3 d t_2 d t_1 \right) \\
\nonumber =&\ L_{341} - L_{342} + L_{343} + L_{344} + L_{345} + L_{346} + L_{347}, \\
\nonumber <&\ 0.103669 - 0.03892 + 0.001712 + 0.000001 + 0.007242 + 0.000056 + 0 \\
=&\ 0.07376,
\end{align}
where
\begin{align}
\nonumber J_{341}(t_1, t_2, t_3, t_4) :=&\ \left\{ (t_1, t_2) \in J_{3}, \right. \\
\nonumber & \quad \frac{1}{29} \leqslant t_3 < \min\left(t_2, \frac{1}{2}(1-t_1-t_2)\right), \\ 
\nonumber & \quad \frac{1}{29} \leqslant t_4 < \min\left(t_3, \frac{1}{2}(1-t_1-t_2-t_3) \right), \\
\nonumber & \quad (t_1, t_2, t_3, t_4) \text{ cannot be partitioned into } (m, n) \in I, \\
\nonumber & \quad t_1 + t_2 + t_3 + t_4 > \frac{59}{87}, \\
\nonumber & \left. \quad \frac{1}{29} \leqslant t_1 < \frac{1}{2},\ \frac{1}{29} \leqslant t_2 < \min\left(t_1, \frac{1}{2}(1-t_1) \right) \right\}, \\
\nonumber J_{342}(t_1, t_2, t_3, t_4, t_5) :=&\ \left\{ (t_1, t_2) \in J_{3}, \right. \\
\nonumber & \quad \frac{1}{29} \leqslant t_3 < \min\left(t_2, \frac{1}{2}(1-t_1-t_2)\right), \\ 
\nonumber & \quad \frac{1}{29} \leqslant t_4 < \min\left(t_3, \frac{1}{2}(1-t_1-t_2-t_3) \right), \\
\nonumber & \quad (t_1, t_2, t_3, t_4) \text{ cannot be partitioned into } (m, n) \in I, \\
\nonumber & \quad t_1 + t_2 + t_3 + t_4 > \frac{59}{87}, \\
\nonumber & \quad t_4 < t_5 < \frac{1}{2}(1-t_1-t_2-t_3-t_4), \\
\nonumber & \quad (t_1, t_2, t_3, t_4, t_5) \text{ can be partitioned into } (m, n) \in I, \\
\nonumber & \left. \quad \frac{1}{29} \leqslant t_1 < \frac{1}{2},\ \frac{1}{29} \leqslant t_2 < \min\left(t_1, \frac{1}{2}(1-t_1) \right) \right\}, \\
\nonumber J_{343}(t_1, t_2, t_3, t_4, t_5, t_6) :=&\ \left\{ (t_1, t_2) \in J_{3}, \right. \\
\nonumber & \quad \frac{1}{29} \leqslant t_3 < \min\left(t_2, \frac{1}{2}(1-t_1-t_2)\right), \\ 
\nonumber & \quad \frac{1}{29} \leqslant t_4 < \min\left(t_3, \frac{1}{2}(1-t_1-t_2-t_3) \right), \\
\nonumber & \quad (t_1, t_2, t_3, t_4) \text{ cannot be partitioned into } (m, n) \in I, \\
\nonumber & \quad t_1 + t_2 + t_3 + 2 t_4 \leqslant \frac{59}{87}, \\
\nonumber & \quad \frac{1}{29} \leqslant t_5 < \min\left(t_4, \frac{1}{2}(1-t_1-t_2-t_3-t_4) \right), \\
\nonumber & \quad \frac{1}{29} \leqslant t_6 < \min\left(t_5, \frac{1}{2}(1-t_1-t_2-t_3-t_4-t_5) \right),\\
\nonumber & \quad (t_1, t_2, t_3, t_4, t_5, t_6) \text{ cannot be partitioned into } (m, n) \in I, \\
\nonumber & \quad t_1 + t_2 + t_3 + t_4 + t_5 + 2 t_6 > \frac{59}{87}, \\
\nonumber & \left. \quad \frac{1}{29} \leqslant t_1 < \frac{1}{2},\ \frac{1}{29} \leqslant t_2 < \min\left(t_1, \frac{1}{2}(1-t_1) \right) \right\}, \\
\nonumber J_{344}(t_1, t_2, t_3, t_4, t_5, t_6, t_7, t_8) :=&\ \left\{ (t_1, t_2) \in J_{3}, \right. \\
\nonumber & \quad \frac{1}{29} \leqslant t_3 < \min\left(t_2, \frac{1}{2}(1-t_1-t_2)\right), \\ 
\nonumber & \quad \frac{1}{29} \leqslant t_4 < \min\left(t_3, \frac{1}{2}(1-t_1-t_2-t_3) \right), \\
\nonumber & \quad (t_1, t_2, t_3, t_4) \text{ cannot be partitioned into } (m, n) \in I, \\
\nonumber & \quad t_1 + t_2 + t_3 + 2 t_4 \leqslant \frac{59}{87}, \\
\nonumber & \quad \frac{1}{29} \leqslant t_5 < \min\left(t_4, \frac{1}{2}(1-t_1-t_2-t_3-t_4) \right), \\
\nonumber & \quad \frac{1}{29} \leqslant t_6 < \min\left(t_5, \frac{1}{2}(1-t_1-t_2-t_3-t_4-t_5) \right),\\
\nonumber & \quad (t_1, t_2, t_3, t_4, t_5, t_6) \text{ cannot be partitioned into } (m, n) \in I, \\
\nonumber & \quad t_1 + t_2 + t_3 + t_4 + t_5 + 2 t_6 \leqslant \frac{59}{87}, \\
\nonumber & \quad \frac{1}{29} \leqslant t_7 < \min\left(t_5, \frac{1}{2}(1-t_1-t_2-t_3-t_4-t_5-t_6) \right),\\
\nonumber & \quad \frac{1}{29} \leqslant t_8 < \min\left(t_5, \frac{1}{2}(1-t_1-t_2-t_3-t_4-t_5-t_6-t_7) \right),\\
\nonumber & \quad (t_1, t_2, t_3, t_4, t_5, t_6, t_7, t_8) \text{ cannot be partitioned into } (m, n) \in I, \\
\nonumber & \left. \quad \frac{1}{29} \leqslant t_1 < \frac{1}{2},\ \frac{1}{29} \leqslant t_2 < \min\left(t_1, \frac{1}{2}(1-t_1) \right) \right\}, \\
\nonumber J_{345}(t_1, t_2, t_3, t_4, t_5, t_6) :=&\ \left\{ (t_1, t_2) \in J_{3}, \right. \\
\nonumber & \quad \frac{1}{29} \leqslant t_3 < \min\left(t_2, \frac{1}{2}(1-t_1-t_2)\right), \\ 
\nonumber & \quad \frac{1}{29} \leqslant t_4 < \min\left(t_3, \frac{1}{2}(1-t_1-t_2-t_3) \right), \\
\nonumber & \quad (t_1, t_2, t_3, t_4) \text{ cannot be partitioned into } (m, n) \in I, \\
\nonumber & \quad t_1 + t_2 + t_3 + t_4 \leqslant \frac{59}{87},\ t_1 + t_2 + t_3 + 2 t_4 > \frac{59}{87}, \\
\nonumber & \quad \frac{1}{29} \leqslant t_5 < \min\left(t_4, \frac{1}{2}(1-t_1-t_2-t_3-t_4) \right), \\
\nonumber & \quad (t_1, t_2, t_3, t_4, t_5) \text{ cannot be partitioned into } (m, n) \in I, \\
\nonumber & \quad \frac{1}{29} \leqslant t_6 < \frac{1}{2}t_1,\\
\nonumber & \quad (1-t_1-t_2-t_3-t_4-t_5, t_2, t_3, t_4, t_5, t_6) \\
\nonumber & \qquad \text{ cannot be partitioned into } (m, n) \in I, \\
\nonumber & \left. \quad \frac{1}{29} \leqslant t_1 < \frac{1}{2},\ \frac{1}{29} \leqslant t_2 < \min\left(t_1, \frac{1}{2}(1-t_1) \right) \right\}, \\
\nonumber J_{346}(t_1, t_2, t_3, t_4, t_5, t_6, t_7, t_8) :=&\ \left\{ (t_1, t_2) \in J_{3}, \right. \\
\nonumber & \quad \frac{1}{29} \leqslant t_3 < \min\left(t_2, \frac{1}{2}(1-t_1-t_2)\right), \\ 
\nonumber & \quad \frac{1}{29} \leqslant t_4 < \min\left(t_3, \frac{1}{2}(1-t_1-t_2-t_3) \right), \\
\nonumber & \quad (t_1, t_2, t_3, t_4) \text{ cannot be partitioned into } (m, n) \in I, \\
\nonumber & \quad t_1 + t_2 + t_3 + t_4 \leqslant \frac{59}{87},\ t_1 + t_2 + t_3 + 2 t_4 > \frac{59}{87}, \\
\nonumber & \quad \frac{1}{29} \leqslant t_5 < \min\left(t_4, \frac{1}{2}(1-t_1-t_2-t_3-t_4) \right), \\
\nonumber & \quad (t_1, t_2, t_3, t_4, t_5) \text{ cannot be partitioned into } (m, n) \in I, \\
\nonumber & \quad \frac{1}{29} \leqslant t_6 < \frac{1}{2}t_1,\\
\nonumber & \quad (1-t_1-t_2-t_3-t_4-t_5, t_2, t_3, t_4, t_5, t_6) \\
\nonumber & \qquad \text{ cannot be partitioned into } (m, n) \in I, \\
\nonumber & \quad (1-t_1-t_2-t_3-t_4-t_5) + t_2 + t_3 + t_4 + t_5 + 2 t_6 \leqslant \frac{59}{87}, \\ 
\nonumber & \quad \frac{1}{29} \leqslant t_7 < \min \left(t_6, \frac{1}{2}(t_1 - t_6) \right), \\
\nonumber & \quad (1-t_1-t_2-t_3-t_4-t_5, t_2, t_3, t_4, t_5, t_6, t_7) \\
\nonumber & \qquad \text{ cannot be partitioned into } (m, n) \in I, \\
\nonumber & \quad \frac{1}{29} \leqslant t_8 < \min \left(t_7, \frac{1}{2}(t_1 - t_6 - t_7) \right), \\ 
\nonumber & \quad (1-t_1-t_2-t_3-t_4-t_5, t_2, t_3, t_4, t_5, t_6, t_7, t_8) \\
\nonumber & \qquad \text{ cannot be partitioned into } (m, n) \in I, \\
\nonumber & \left. \quad \frac{1}{29} \leqslant t_1 < \frac{1}{2},\ \frac{1}{29} \leqslant t_2 < \min\left(t_1, \frac{1}{2}(1-t_1) \right) \right\}, \\
\nonumber J_{347}(t_1, t_2, t_3, t_4, t_5, t_6, t_7, t_8) :=&\ \left\{ (t_1, t_2) \in J_{3}, \right. \\
\nonumber & \quad \frac{1}{29} \leqslant t_3 < \min\left(t_2, \frac{1}{2}(1-t_1-t_2)\right), \\ 
\nonumber & \quad \frac{1}{29} \leqslant t_4 < \min\left(t_3, \frac{1}{2}(1-t_1-t_2-t_3) \right), \\
\nonumber & \quad (t_1, t_2, t_3, t_4) \text{ cannot be partitioned into } (m, n) \in I, \\
\nonumber & \quad t_1 + t_2 + t_3 + t_4 \leqslant \frac{59}{87},\ t_1 + t_2 + t_3 + 2 t_4 > \frac{59}{87}, \\
\nonumber & \quad \frac{1}{29} \leqslant t_5 < \min\left(t_4, \frac{1}{2}(1-t_1-t_2-t_3-t_4) \right), \\
\nonumber & \quad (t_1, t_2, t_3, t_4, t_5) \text{ cannot be partitioned into } (m, n) \in I, \\
\nonumber & \quad \frac{1}{29} \leqslant t_6 < \frac{1}{2}t_1,\\
\nonumber & \quad (1-t_1-t_2-t_3-t_4-t_5, t_2, t_3, t_4, t_5, t_6) \\
\nonumber & \qquad \text{ cannot be partitioned into } (m, n) \in I, \\
\nonumber & \quad (1-t_1-t_2-t_3-t_4-t_5) + t_2 + t_3 + t_4 + t_5 + 2 t_6 > \frac{59}{87}, \\ 
\nonumber & \quad (t_1-t_6) + t_2 + t_3 + t_4 + 2 t_5 + t_6 \leqslant \frac{59}{87}, \\ 
\nonumber & \quad \frac{1}{29} \leqslant t_7 < \min\left(t_5, \frac{1}{2}(1-t_1-t_2-t_3-t_4-t_5) \right), \\
\nonumber & \quad (t_1-t_6, t_2, t_3, t_4, t_5, t_6, t_7) \\
\nonumber & \qquad \text{ cannot be partitioned into } (m, n) \in I, \\
\nonumber & \quad \frac{1}{29} \leqslant t_8 < \min\left(t_7, \frac{1}{2}(1-t_1-t_2-t_3-t_4-t_5-t_7) \right), \\
\nonumber & \quad (t_1-t_6, t_2, t_3, t_4, t_5, t_6, t_7, t_8) \\
\nonumber & \qquad \text{ cannot be partitioned into } (m, n) \in I, \\
\nonumber & \left. \quad \frac{1}{29} \leqslant t_1 < \frac{1}{2},\ \frac{1}{29} \leqslant t_2 < \min\left(t_1, \frac{1}{2}(1-t_1) \right) \right\}.
\end{align}

For $S_{35}$ we can also use the devices mentioned earlier, but in this case we will perform a role-reversal on the triple sum if $t_1 + t_2 + t_3 > \frac{59}{87}$. In this way we get a loss of
\begin{align}
\nonumber & \left( \int_{(t_1, t_2, t_3, t_4) \in J_{3501}} \frac{\omega \left(\frac{1 - t_1 - t_2 - t_3 - t_4}{t_4}\right)}{t_1 t_2 t_3 t_4^2} d t_4 d t_3 d t_2 d t_1 \right) \\
\nonumber -& \left( \int_{(t_1, t_2, t_3, t_4, t_5) \in J_{3502}} \frac{\omega \left(\frac{1 - t_1 - t_2 - t_3 - t_4 - t_5}{t_5}\right)}{t_1 t_2 t_3 t_4 t_5^2} d t_5 d t_4 d t_3 d t_2 d t_1 \right) \\
\nonumber +& \left( \int_{(t_1, t_2, t_3, t_4, t_5, t_6) \in J_{3503}} \frac{\omega_1 \left(\frac{1 - t_1 - t_2 - t_3 - t_4 - t_5 - t_6}{t_6}\right)}{t_1 t_2 t_3 t_4 t_5 t_6^2} d t_6 d t_5 d t_4 d t_3 d t_2 d t_1 \right) \\
\nonumber +& \left( \int_{(t_1, t_2, t_3, t_4, t_5, t_6, t_7, t_8) \in J_{3504}} \frac{\max \left(\frac{t_8}{1 - t_1 - t_2 - t_3 - t_4 - t_5 - t_6 - t_7 - t_8}, 0.5672\right)}{t_1 t_2 t_3 t_4 t_5 t_6 t_7 t_8^2} d t_8 d t_7 d t_6 d t_5 d t_4 d t_3 d t_2 d t_1 \right) \\
\nonumber +& \left( \int_{(t_1, t_2, t_3, t_4) \in J_{3505}} \frac{\omega \left(\frac{1 - t_1 - t_2 - t_3}{t_3}\right) \omega \left(\frac{t_1 - t_4}{t_4}\right)}{t_2 t_3^2 t_4^2} d t_4 d t_3 d t_2 d t_1 \right) \\
\nonumber -& \left( \int_{(t_1, t_2, t_3, t_4, t_5) \in J_{3506}} \frac{\omega_0 \left(\frac{1 - t_1 - t_2 - t_3}{t_3}\right) \omega_0 \left(\frac{t_1 - t_4 - t_5}{t_5}\right)}{t_2 t_3^2 t_4 t_5^2} d t_5 d t_4 d t_3 d t_2 d t_1 \right) \\
\nonumber -& \left( \int_{(t_1, t_2, t_3, t_4, t_5) \in J_{3507}} \frac{\omega_0 \left(\frac{1 - t_1 - t_2 - t_3 - t_5}{t_5}\right) \omega_0 \left(\frac{t_1 - t_4}{t_4}\right)}{t_2 t_3 t_4^2 t_5^2} d t_5 d t_4 d t_3 d t_2 d t_1 \right) \\
\nonumber +& \left( \int_{(t_1, t_2, t_3, t_4, t_5, t_6) \in J_{3508}} \frac{\omega_1 \left(\frac{1 - t_1 - t_2 - t_3 - t_6}{t_6}\right) \omega_1 \left(\frac{t_1 - t_4 - t_5}{t_5}\right)}{t_2 t_3 t_4 t_5^2 t_6^2} d t_6 d t_5 d t_4 d t_3 d t_2 d t_1 \right) \\
\nonumber +& \left( \int_{(t_1, t_2, t_3, t_4, t_5, t_6) \in J_{3509}} \frac{\omega_1 \left(\frac{1 - t_1 - t_2 - t_3}{t_3}\right) \omega_1 \left(\frac{t_1 - t_4 - t_5 - t_6}{t_6}\right)}{t_2 t_3^2 t_4 t_5 t_6^2} d t_6 d t_5 d t_4 d t_3 d t_2 d t_1 \right) \\
\nonumber +& \left( \int_{(t_1, t_2, t_3, t_4, t_5, t_6) \in J_{3510}} \frac{\omega_1 \left(\frac{1 - t_1 - t_2 - t_3 - t_5 - t_6}{t_6}\right) \omega_1 \left(\frac{t_1 - t_4}{t_4}\right)}{t_2 t_3 t_4^2 t_5 t_6^2} d t_6 d t_5 d t_4 d t_3 d t_2 d t_1 \right) \\
\nonumber =&\ L_{3501} - L_{3502} + L_{3503} + L_{3504} + L_{3505} - L_{3506} - L_{3507} + L_{3508} + L_{3509} + L_{3510}, \\
\nonumber <&\ 0.079609 - 0.02541 + 0.000001 + 0 + 0.469283 - 0.100821 - 0.124982 + 0.035428 + 0.006114 + 0 \\
=&\ 0.339222,
\end{align}
where
\begin{align}
\nonumber J_{3501}(t_1, t_2, t_3, t_4) :=&\ \left\{ (t_1, t_2) \in J_{4}, \right. \\
\nonumber & \quad \frac{1}{29} \leqslant t_3 < \min\left(t_2, \frac{1}{2}(1-t_1-t_2)\right),\ t_1 + t_2 + t_3 \leqslant \frac{59}{87}, \\ 
\nonumber & \quad \frac{1}{29} \leqslant t_4 < \min\left(t_3, \frac{1}{2}(1-t_1-t_2-t_3) \right), \\
\nonumber & \quad (t_1, t_2, t_3, t_4) \text{ cannot be partitioned into } (m, n) \in I, \\
\nonumber & \quad t_1 + t_2 + t_3 + 2 t_4 > \frac{59}{87}, \\
\nonumber & \left. \quad \frac{1}{29} \leqslant t_1 < \frac{1}{2},\ \frac{1}{29} \leqslant t_2 < \min\left(t_1, \frac{1}{2}(1-t_1) \right) \right\}, \\
\nonumber J_{3502}(t_1, t_2, t_3, t_4, t_5) :=&\ \left\{ (t_1, t_2) \in J_{4}, \right. \\
\nonumber & \quad \frac{1}{29} \leqslant t_3 < \min\left(t_2, \frac{1}{2}(1-t_1-t_2)\right),\ t_1 + t_2 + t_3 \leqslant \frac{59}{87}, \\ 
\nonumber & \quad \frac{1}{29} \leqslant t_4 < \min\left(t_3, \frac{1}{2}(1-t_1-t_2-t_3) \right), \\
\nonumber & \quad (t_1, t_2, t_3, t_4) \text{ cannot be partitioned into } (m, n) \in I, \\
\nonumber & \quad t_1 + t_2 + t_3 + 2 t_4 > \frac{59}{87}, \\
\nonumber & \quad t_4 < t_5 < \frac{1}{2}(1-t_1-t_2-t_3-t_4), \\
\nonumber & \quad (t_1, t_2, t_3, t_4, t_5) \text{ can be partitioned into } (m, n) \in I, \\
\nonumber & \left. \quad \frac{1}{29} \leqslant t_1 < \frac{1}{2},\ \frac{1}{29} \leqslant t_2 < \min\left(t_1, \frac{1}{2}(1-t_1) \right) \right\}, \\
\nonumber J_{3503}(t_1, t_2, t_3, t_4, t_5, t_6) :=&\ \left\{ (t_1, t_2) \in J_{4}, \right. \\
\nonumber & \quad \frac{1}{29} \leqslant t_3 < \min\left(t_2, \frac{1}{2}(1-t_1-t_2)\right),\ t_1 + t_2 + t_3 \leqslant \frac{59}{87}, \\ 
\nonumber & \quad \frac{1}{29} \leqslant t_4 < \min\left(t_3, \frac{1}{2}(1-t_1-t_2-t_3) \right), \\
\nonumber & \quad (t_1, t_2, t_3, t_4) \text{ cannot be partitioned into } (m, n) \in I, \\
\nonumber & \quad t_1 + t_2 + t_3 + 2 t_4 \leqslant \frac{59}{87}, \\
\nonumber & \quad \frac{1}{29} \leqslant t_5 < \min\left(t_4, \frac{1}{2}(1-t_1-t_2-t_3-t_4) \right), \\
\nonumber & \quad \frac{1}{29} \leqslant t_6 < \min\left(t_5, \frac{1}{2}(1-t_1-t_2-t_3-t_4-t_5) \right),\\
\nonumber & \quad (t_1, t_2, t_3, t_4, t_5, t_6) \text{ cannot be partitioned into } (m, n) \in I, \\
\nonumber & \quad t_1 + t_2 + t_3 + t_4 + t_5 + 2 t_6 > \frac{59}{87}, \\
\nonumber & \left. \quad \frac{1}{29} \leqslant t_1 < \frac{1}{2},\ \frac{1}{29} \leqslant t_2 < \min\left(t_1, \frac{1}{2}(1-t_1) \right) \right\}, \\
\nonumber J_{3504}(t_1, t_2, t_3, t_4, t_5, t_6, t_7, t_8) :=&\ \left\{ (t_1, t_2) \in J_{4}, \right. \\
\nonumber & \quad \frac{1}{29} \leqslant t_3 < \min\left(t_2, \frac{1}{2}(1-t_1-t_2)\right),\ t_1 + t_2 + t_3 \leqslant \frac{59}{87}, \\ 
\nonumber & \quad \frac{1}{29} \leqslant t_4 < \min\left(t_3, \frac{1}{2}(1-t_1-t_2-t_3) \right), \\
\nonumber & \quad (t_1, t_2, t_3, t_4) \text{ cannot be partitioned into } (m, n) \in I, \\
\nonumber & \quad t_1 + t_2 + t_3 + 2 t_4 \leqslant \frac{59}{87}, \\
\nonumber & \quad \frac{1}{29} \leqslant t_5 < \min\left(t_4, \frac{1}{2}(1-t_1-t_2-t_3-t_4) \right), \\
\nonumber & \quad \frac{1}{29} \leqslant t_6 < \min\left(t_5, \frac{1}{2}(1-t_1-t_2-t_3-t_4-t_5) \right),\\
\nonumber & \quad (t_1, t_2, t_3, t_4, t_5, t_6) \text{ cannot be partitioned into } (m, n) \in I, \\
\nonumber & \quad t_1 + t_2 + t_3 + t_4 + t_5 + 2 t_6 \leqslant \frac{59}{87}, \\
\nonumber & \quad \frac{1}{29} \leqslant t_7 < \min\left(t_5, \frac{1}{2}(1-t_1-t_2-t_3-t_4-t_5-t_6) \right),\\
\nonumber & \quad \frac{1}{29} \leqslant t_8 < \min\left(t_5, \frac{1}{2}(1-t_1-t_2-t_3-t_4-t_5-t_6-t_7) \right),\\
\nonumber & \quad (t_1, t_2, t_3, t_4, t_5, t_6, t_7, t_8) \text{ cannot be partitioned into } (m, n) \in I, \\
\nonumber & \left. \quad \frac{1}{29} \leqslant t_1 < \frac{1}{2},\ \frac{1}{29} \leqslant t_2 < \min\left(t_1, \frac{1}{2}(1-t_1) \right) \right\}, \\
\nonumber J_{3505}(t_1, t_2, t_3, t_4) :=&\ \left\{ (t_1, t_2) \in J_{4}, \right. \\
\nonumber & \quad \frac{1}{29} \leqslant t_3 < \min\left(t_2, \frac{1}{2}(1-t_1-t_2)\right),\ t_1 + t_2 + t_3 > \frac{59}{87}, \\ 
\nonumber & \quad \frac{1}{29} \leqslant t_4 < \frac{1}{2}t_1, \\
\nonumber & \quad (1-t_1-t_2-t_3, t_2, t_3, t_4) \text{ cannot be partitioned into } (m, n) \in I, \\
\nonumber & \quad (1-t_1-t_2-t_3) + t_2 + t_3 + 2 t_4 > \frac{59}{87}, \\
\nonumber & \quad (t_1 - t_4) + t_2 + 2 t_3 + t_4 > \frac{59}{87}, \\
\nonumber & \left. \quad \frac{1}{29} \leqslant t_1 < \frac{1}{2},\ \frac{1}{29} \leqslant t_2 < \min\left(t_1, \frac{1}{2}(1-t_1) \right) \right\}, \\
\nonumber J_{3506}(t_1, t_2, t_3, t_4, t_5) :=&\ \left\{ (t_1, t_2) \in J_{4}, \right. \\
\nonumber & \quad \frac{1}{29} \leqslant t_3 < \min\left(t_2, \frac{1}{2}(1-t_1-t_2)\right),\ t_1 + t_2 + t_3 > \frac{59}{87}, \\ 
\nonumber & \quad \frac{1}{29} \leqslant t_4 < \frac{1}{2}t_1, \\
\nonumber & \quad (1-t_1-t_2-t_3, t_2, t_3, t_4) \text{ cannot be partitioned into } (m, n) \in I, \\
\nonumber & \quad (1-t_1-t_2-t_3) + t_2 + t_3 + 2 t_4 > \frac{59}{87}, \\
\nonumber & \quad (t_1 - t_4) + t_2 + 2 t_3 + t_4 > \frac{59}{87}, \\
\nonumber & \quad t_4 < t_5 < \frac{1}{2}(t_1-t_4), \\
\nonumber & \quad (1-t_1-t_2-t_3, t_2, t_3, t_4, t_5) \text{ can be partitioned into } (m, n) \in I, \\
\nonumber & \left. \quad \frac{1}{29} \leqslant t_1 < \frac{1}{2},\ \frac{1}{29} \leqslant t_2 < \min\left(t_1, \frac{1}{2}(1-t_1) \right) \right\}, \\
\nonumber J_{3507}(t_1, t_2, t_3, t_4, t_5) :=&\ \left\{ (t_1, t_2) \in J_{4}, \right. \\
\nonumber & \quad \frac{1}{29} \leqslant t_3 < \min\left(t_2, \frac{1}{2}(1-t_1-t_2)\right),\ t_1 + t_2 + t_3 > \frac{59}{87}, \\ 
\nonumber & \quad \frac{1}{29} \leqslant t_4 < \frac{1}{2}t_1, \\
\nonumber & \quad (1-t_1-t_2-t_3, t_2, t_3, t_4) \text{ cannot be partitioned into } (m, n) \in I, \\
\nonumber & \quad (1-t_1-t_2-t_3) + t_2 + t_3 + 2 t_4 > \frac{59}{87}, \\
\nonumber & \quad (t_1 - t_4) + t_2 + 2 t_3 + t_4 > \frac{59}{87}, \\
\nonumber & \quad t_3 < t_5 < \frac{1}{2}(1-t_1-t_2-t_3), \\
\nonumber & \quad (t_1-t_4, t_2, t_3, t_4, t_5) \text{ can be partitioned into } (m, n) \in I, \\
\nonumber & \left. \quad \frac{1}{29} \leqslant t_1 < \frac{1}{2},\ \frac{1}{29} \leqslant t_2 < \min\left(t_1, \frac{1}{2}(1-t_1) \right) \right\}, \\
\nonumber J_{3508}(t_1, t_2, t_3, t_4, t_5, t_6) :=&\ \left\{ (t_1, t_2) \in J_{4}, \right. \\
\nonumber & \quad \frac{1}{29} \leqslant t_3 < \min\left(t_2, \frac{1}{2}(1-t_1-t_2)\right),\ t_1 + t_2 + t_3 > \frac{59}{87}, \\ 
\nonumber & \quad \frac{1}{29} \leqslant t_4 < \frac{1}{2}t_1, \\
\nonumber & \quad (1-t_1-t_2-t_3, t_2, t_3, t_4) \text{ cannot be partitioned into } (m, n) \in I, \\
\nonumber & \quad (1-t_1-t_2-t_3) + t_2 + t_3 + 2 t_4 > \frac{59}{87}, \\
\nonumber & \quad (t_1 - t_4) + t_2 + 2 t_3 + t_4 > \frac{59}{87}, \\
\nonumber & \quad t_4 < t_5 < \frac{1}{2}(t_1-t_4), \\
\nonumber & \quad (1-t_1-t_2-t_3, t_2, t_3, t_4, t_5) \text{ can be partitioned into } (m, n) \in I, \\
\nonumber & \quad t_3 < t_6 < \frac{1}{2}(1-t_1-t_2-t_3), \\
\nonumber & \quad (t_1-t_4, t_2, t_3, t_4, t_6) \text{ can be partitioned into } (m, n) \in I, \\
\nonumber & \left. \quad \frac{1}{29} \leqslant t_1 < \frac{1}{2},\ \frac{1}{29} \leqslant t_2 < \min\left(t_1, \frac{1}{2}(1-t_1) \right) \right\}, \\
\nonumber J_{3509}(t_1, t_2, t_3, t_4, t_5, t_6) :=&\ \left\{ (t_1, t_2) \in J_{4}, \right. \\
\nonumber & \quad \frac{1}{29} \leqslant t_3 < \min\left(t_2, \frac{1}{2}(1-t_1-t_2)\right),\ t_1 + t_2 + t_3 > \frac{59}{87}, \\ 
\nonumber & \quad \frac{1}{29} \leqslant t_4 < \frac{1}{2}t_1, \\
\nonumber & \quad (1-t_1-t_2-t_3, t_2, t_3, t_4) \text{ cannot be partitioned into } (m, n) \in I, \\
\nonumber & \quad (1-t_1-t_2-t_3) + t_2 + t_3 + 2 t_4 \leqslant \frac{59}{87}, \\
\nonumber & \quad \frac{1}{29} \leqslant t_5 < \min\left(t_4, \frac{1}{2}(t_1-t_4)\right), \\
\nonumber & \quad (1-t_1-t_2-t_3, t_2, t_3, t_4, t_5) \text{ cannot be partitioned into } (m, n) \in I, \\
\nonumber & \quad \frac{1}{29} \leqslant t_6 < \min\left(t_5, \frac{1}{2}(t_1-t_4-t_5)\right), \\
\nonumber & \quad (1-t_1-t_2-t_3, t_2, t_3, t_4, t_5, t_6) \text{ cannot be partitioned into } (m, n) \in I, \\
\nonumber & \left. \quad \frac{1}{29} \leqslant t_1 < \frac{1}{2},\ \frac{1}{29} \leqslant t_2 < \min\left(t_1, \frac{1}{2}(1-t_1) \right) \right\}, \\
\nonumber J_{3510}(t_1, t_2, t_3, t_4, t_5, t_6) :=&\ \left\{ (t_1, t_2) \in J_{4}, \right. \\
\nonumber & \quad \frac{1}{29} \leqslant t_3 < \min\left(t_2, \frac{1}{2}(1-t_1-t_2)\right),\ t_1 + t_2 + t_3 > \frac{59}{87}, \\ 
\nonumber & \quad \frac{1}{29} \leqslant t_4 < \frac{1}{2}t_1, \\
\nonumber & \quad (1-t_1-t_2-t_3, t_2, t_3, t_4) \text{ cannot be partitioned into } (m, n) \in I, \\
\nonumber & \quad (1-t_1-t_2-t_3) + t_2 + t_3 + 2 t_4 > \frac{59}{87}, \\
\nonumber & \quad (t_1-t_4) + t_2 + 2 t_3 + t_4 \leqslant \frac{59}{87}, \\
\nonumber & \quad \frac{1}{29} \leqslant t_5 < \min\left(t_3, \frac{1}{2}(1-t_1-t_2-t_3)\right), \\
\nonumber & \quad (t_1-t_4, t_2, t_3, t_4, t_5) \text{ cannot be partitioned into } (m, n) \in I, \\
\nonumber & \quad \frac{1}{29} \leqslant t_6 < \min\left(t_5, \frac{1}{2}(1-t_1-t_2-t_3-t_5)\right), \\
\nonumber & \quad (t_1-t_4, t_2, t_3, t_4, t_5, t_6) \text{ cannot be partitioned into } (m, n) \in I, \\
\nonumber & \left. \quad \frac{1}{29} \leqslant t_1 < \frac{1}{2},\ \frac{1}{29} \leqslant t_2 < \min\left(t_1, \frac{1}{2}(1-t_1) \right) \right\}.
\end{align}

For the remaining $S_{36}$, we choose to discard the whole region giving the loss
\begin{equation}
L_{36} := \int_{\frac{1}{29}}^{\frac{1}{2}} \int_{\frac{1}{29}}^{\min\left(t_1, \frac{1-t_1}{2}\right)} \mathbbm{1}_{(t_1, t_2) \in J_5} \frac{\omega\left(\frac{1 - t_1 - t_2}{t_2}\right)}{t_1 t_2^2} d t_2 d t_1 < 0.093181.
\end{equation}
Note that in this region only products of three primes are counted.

Finally, by (3)--(16), the total loss is less than
\begin{align}
\nonumber & L_{32} + (L_{331} + L_{332} + L_{333} + L_{334}) + (L_{341} + L_{342} + L_{343} + L_{344} + L_{345} + L_{346} + L_{347}) \\
\nonumber & + (L_{3501} + L_{3502} + L_{3503} + L_{3504} + L_{3505} + L_{3506} + L_{3507} + L_{3508} + L_{3509} + L_{3510}) + L_{36} \\
\nonumber <&\ 0.397685 + 0.091383 + 0.07376 + 0.339222 + 0.093181 \\
\nonumber <&\ 0.996
\end{align}
and the proof of Theorem~\ref{t1} is completed.

\section*{Acknowledgements} 
The author would like to thank Professor Chaohua Jia for his encouragement and some helpful discussions.

\bibliographystyle{plain}
\bibliography{bib}

\end{document}